\newtheorem{theorem}{Theorem}[section]
\newtheorem{lemma}{Lemma}[section]
\newtheorem{proposition}{Proposition}[section]
\theoremstyle{definition}
\newtheorem{defin}{Definition}[section]
\numberwithin{equation}{section}
\newcommand{\R}{{\mathbb{R}}}
\begin{document}
\baselineskip=17pt

\title{Medians, continuity, and vanishing oscillation}
\author[J. Poelhuis]{Jonathan  Poelhuis}
\address{Department of Mathematics\\ Indiana University\\
Bloomington, IN, 47405, USA}
\email{jpoelhui@umail.iu.edu}

\author[A. Torchinsky]{Alberto Torchinsky}
\address{Department of Mathematics\\ Indiana University\\
Bloomington, IN, 47405, USA}
\email{torchins@indiana.edu}

\date{}

\begin{abstract}  In this paper we consider  properties of medians as they pertain to the continuity and vanishing oscillation of a
function. Our approach is based on  the observation  that medians are  related to
local sharp maximal functions restricted to a cube of
$\R^n$.
\end{abstract}

\subjclass[2010]{Primary 42B25; Secondary 46E30}
\keywords{ Medians, Local sharp maximal function}

\maketitle

In considering the problem of  the resistance of
materials to certain types of deformations,   F. John was
led to the study of  quasi-isometric   mappings. The
setting is essentially as follows. Let $Q_0\subset \R^n $ be a cube and $f $ a continuous
function on $Q_0$. Assume that to each subcube $Q$ of $Q_0$ with sides parallel to
those of $Q_0$ there is assigned a constant $c_Q$ and let $\mu_Q$ be the function
of the real variable $M$ given by
\[\mu_Q(M)=\frac{|\{y\in Q: |f(y)-c_Q|>M\}|}{|Q|}\,.\]
Let $\phi(M)=\sup_{Q\subset Q_0} \mu_Q(M)$, $0<s<1/2$, and $\lambda$ a number such that
$\phi(\lambda)\le s$. Then under these
assumptions
\[\phi(M)\le A e^{-BM/\lambda} \]
holds for all nonnegative $M$ where $A,B$ are universal functions of $s$ and the dimension $n$.
Thus the space  of functions of bounded mean oscillation $(BMO)$ was introduced  and the
John-Nirenberg inequality established \cite{J}.

Str\"{o}mberg adopted this setting when studying   spaces close to $BMO$ and discussed different ways
of describing the oscillation of a function $f$  on cubes.
He also incorporated the   value $s=1/2$ above, which corresponds to the notion of
  median value $m_f (Q)=m_f(1/2,Q) $ of $f$ over $Q$   \cite{JOS}.
 The local maximal functions of Str\"{o}mberg
are of particular interest because they allow for pointwise estimates for   Calder\'on-Zygmund
singular integral operators \cite{JT, L}.

In this paper we consider  properties of medians as they pertain to the continuity and vanishing oscillation of a
function. Our approach is based on  the observation  that medians are related
to local sharp maximal functions restricted to a cube
$Q_0\subset \R^n$ with parameter $0<s\le 1/2$ by  means of the expression
\begin{align*} M^{\sharp,{\phi}}_{0,s,Q_0 }f(x) &= \sup_{x\in Q,Q \subset Q_0} \inf_c \frac{m_{|f - c|}(1-s,Q)}{\phi(|Q|)}\\
&\sim \sup_{x\in Q, Q \subset Q_0} \frac{m_{|f - m_f(1-s,Q)|}(1-s,Q)}{\phi(|Q|)}\,,
\end{align*}
where $\phi=1$ in the case of functions with bounded median oscillation
with parameter $s$ $(bmo_s)$, and satisfies appropriate
conditions in the case of functions with vanishing median oscillation
with parameter $s$ $(vmo_s)$.
Str\"{o}mberg showed that $ {bmo}_s=BMO$,
and, similarly,
 we show here that for sufficiently small $s$,
$ vmo_s=VMO$, the space of functions of vanishing mean oscillation.
Moreover,
since $VMO$ is  known to contain bounded discontinuous functions \cite{S, SP}, we  complete the
picture  by giving  criteria
for continuity  of functions equivalent to a bounded function on a cube in terms of medians.

The paper is organized as follows. In Section 1 we introduce the notions of median  and maximal median
with respect to a parameter $0<s<1$; when $s\ne 1/2$ we refer to these medians as
 biased   with parameter $s$.
In Section 2 we consider the a.e.\! convergence of maximal biased medians  in the spirit
of Fujii's results for $s=1/2$ \cite{F}. In Section 3, motivated by similar results involving averages \cite{ST},
 we characterize continuity in terms of maximal biased medians with parameter $>1/2$.
In Section 4 we extend the  Str\"{o}mberg decomposition of cubes
  to parameters $ >1/2$. Finally, in Section 5 we consider the spaces of functions
with vanishing median oscillation, establish
a John-Nirenberg type inequality they satisfy, and show that, as anticipated,
they coincide
with $VMO$ for sufficiently small $s$.

\section{Medians and maximal medians}

In what follows we restrict our attention to cubes with sides parallel to the coordinate axis.

\begin{defin}  For a cube $Q\subset \R^n$, $0<s<1$,  and a real-valued measurable function $f$  on $Q$ we say that
  $m_f (s,Q) $ is a \emph{ median value of $f$ over $Q$ with parameter $s$} if
\begin{equation}   |\{y \in Q : f(y)< m_f (s,Q) \}|\le s|Q| \end{equation}
and
\begin{equation}   |\{y \in Q : f(y)> m_f (s,Q) \}|\le (1-s) |Q|\,.\end{equation}
\end{defin}

When $s=1/2$, $m_f (1/2,Q)=m_f(Q)$ corresponds to a median value of  $f$ over $Q$.
The set of median values   of $f$ is one
point or a closed interval as the  example  $f =\chi_{[ 1/2 ,1)}$ on  $ [0,1]$
shows.
  It is therefore  convenient to work with   maximal medians, which are uniquely defined
 \cite{C,F}. More precisely, we have

\begin{defin} For a cube $Q\subset \R^n$, $0<s<1$,
and a real-valued measurable function $f$  on $Q$,
we say that   $M_f(s,Q)$ is the \emph{ maximal median    of $f$ over $Q$ with parameter $s$} if
\[M_f (s, Q) = \sup\{M : |\{ y \in Q : f(y) < M\}|\}\le s|Q|\,.\]
\end{defin}

The reader will have no difficulty in proving  the sup above is assumed, that is to say,
\[ |\{ y \in Q : f(y) < M_f(s,Q)\}|\le s |Q|\,.\]

To justify the nomenclature of maximal median  we verify that    $M_f (s,Q)$ satisfies the conditions
that characterize medians.
(1.1)  is guaranteed
since $M_f (s,Q)$ is the maximum value for which it holds. As for (1.2),
let $B_n= \{y\in Q : f(y) \ge M_f (s, Q) + 1/n\}$ and note that $|B_n|\le  (1-s) |Q|$, all $n$,
and
$\{y\in Q : f(y) > M_f(s,Q)\} \subset \liminf_n B_n$. Then
$|\{y\in Q : f(y) > M_f (s, Q) \}| \le \liminf_n |B_n|\le  (1-s)|Q|$.

Hereafter when considering a median  we   mean the maximal median and denote it
simply by $m_f (s, Q)$.
 Clearly maximal medians satisfy
\begin{equation} |\{y \in Q : f(y) \le m_f (s,Q)\}|\ge s|Q|\,,
\end{equation}
and
\begin{equation}
 |\{y \in Q : f(y) \ge m_f (s,Q)\}|\ge (1-s)|Q|\,.
 \end{equation}

We summarize the basic properties of maximal medians that
are of interest to us in the following Proposition.

\begin{proposition}  Let $Q\subset \R^n$ be a cube, $0<s,t<1$,
and $f,g$ real-valued measurable function  on $Q$.
Then the following properties hold:
\begin{enumerate}
[ \upshape (i)]
\item  For $s < t$,
\begin{equation} m_f (s,Q) \le  m_f (t,Q)\,.
\end{equation}
\item
If $f \le  g$ a.e., then
\begin{equation}
m_f (s,Q) \le m_g(s,Q)\,.
\end{equation}
 \item
For a constant $c$,
\begin{equation} m_f (s,Q) - c = m_{f-c}(s,Q)\,.\end{equation}
\item  If $ f, g \ge 0$ a.e., $0 < s, s_1 < 1$, and $0< t < s+s_1- 1$,
\begin{equation} m_{f+g}(t,Q) \le  m_f (s,Q) + m_g(s_1,Q)\,.\end{equation}
\item
 In general,  $m_{-|f|}(s, Q)\le m_f(s,Q)\le m_{|f|}(s,Q)$. And,
if $m_f (s,Q) \le  0$,
\begin{equation} |m_f (s,Q)|\le m_{|f|}(1-s,Q)\,.\end{equation}
Thus for general $f$,
\begin{equation}
|m_f (s,Q)|\le m_{|f|}(s,Q)\,,\quad  1/2\le s<1\,.
\end{equation}
\item
If $f\ge 0$ is locally integrable  and $f_Q$ denotes the average of $f$ over $Q$, then
\begin{equation}
m_{f}(s, Q)\le \frac1{1-s}\,f_Q\,.
\end{equation}
\end{enumerate}
\end{proposition}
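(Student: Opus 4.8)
The plan is to read every item off the defining formula for the maximal median,
\[
m_f(s,Q)=\sup\bigl\{\,M:\ |\{y\in Q:\ f(y)<M\}|\le s|Q|\,\bigr\},
\]
together with its two recorded consequences (1.3) and (1.4); the only recurring subtlety is keeping track of strict versus non-strict inequalities in the level sets, and using (1.3)--(1.4) rather than (1.1)--(1.2) whenever a non-strict inequality is needed. Items (i)--(iii) are formal. For (i), the family of admissible $M$ only enlarges as $s$ grows, so the supremum is nondecreasing, giving (1.5). For (ii), $f\le g$ a.e.\ forces $\{g<M\}\subseteq\{f<M\}$ modulo a null set, so every $M$ admissible for $f$ is admissible for $g$, and (1.6) follows by taking suprema. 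For (iii), $\{f-c<M\}=\{f<M+c\}$, so the admissible set for $f-c$ is the one for $f$ translated by $-c$, which is (1.7).

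For (iv), put $a=m_f(s,Q)$ and $b=m_g(s_1,Q)$. By (1.3) applied to $f$ and to $g$ we have $|\{f\le a\}|\ge s|Q|$ and $|\{g\le b\}|\ge s_1|Q|$, and since both sets lie in $Q$ the inclusion--exclusion bound gives $|\{f\le a\}\cap\{g\le b\}|\ge(s+s_1-1)|Q|$. On that intersection $f+g\le a+b$, so $|\{f+g\le a+b\}|\ge(s+s_1-1)|Q|$. Hence for every $M>a+b$ one has $|\{f+g<M\}|\ge(s+s_1-1)|Q|>t|Q|$, so no such $M$ is admissible in the supremum defining $m_{f+g}(t,Q)$; therefore $m_{f+g}(t,Q)\le a+b$, which is (1.8). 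The strict inequality $t<s+s_1-1$ is exactly what is needed at the last step.

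For (v), the chain $m_{-|f|}(s,Q)\le m_f(s,Q)\le m_{|f|}(s,Q)$ is immediate from (ii) since $-|f|\le f\le|f|$. Suppose $a:=m_f(s,Q)\le0$. From (1.3), $|\{f\le a\}|\ge s|Q|$, hence $|\{f>a\}|\le(1-s)|Q|$; and because $a\le0$, the inequality $|f(y)|<-a$ forces $f(y)>a$, so $\{|f|<-a\}\subseteq\{f>a\}$ and thus $|\{|f|<-a\}|\le(1-s)|Q|$. This says $-a$ is admissible in the supremum defining $m_{|f|}(1-s,Q)$, so $|m_f(s,Q)|=-a\le m_{|f|}(1-s,Q)$, which is (1.9). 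Finally (1.10) follows by cases: if $m_f(s,Q)\ge0$ use the first chain; if $m_f(s,Q)\le0$ combine (1.9) with the monotonicity (i), legitimate because $1-s\le s$ when $1/2\le s<1$.

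For (vi), comparison of $f\ge0$ with the zero function in (ii) gives $a:=m_f(s,Q)\ge0$, and (1.4) gives $|\{f\ge a\}|\ge(1-s)|Q|$; integrating $f$ over that set, $f_Q=|Q|^{-1}\int_Q f\ge|Q|^{-1}a\,|\{f\ge a\}|\ge(1-s)a$, which rearranges to (1.11). I do not anticipate a genuine obstacle: the proposition is a list of short lemmas, and the only steps that are not purely mechanical are the inclusion--exclusion estimate in (iv) and the inclusion $\{|f|<-a\}\subseteq\{f>a\}$ in (v); the one thing to watch throughout is the distinction between $<$ and $\le$ in the various level sets.
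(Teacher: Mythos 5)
Your proof is correct and follows essentially the same approach as the paper: each item is read off the defining level sets, with the same inclusion--exclusion idea in (iv), the same containment $\{|f|<-a\}\subseteq\{f>a\}$ in (v), and the same Chebyshev step in (vi). The only stylistic difference is in (iv), where you argue directly that no $M>a+b$ is admissible rather than deriving a contradiction from an $\eta$-perturbed level set as the paper does, and in (vi), where you avoid the paper's (harmless) case split on $m_f(s,Q)\ne 0$; neither changes the substance.
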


\begin{proof}  (\upshape i) Since $|\{y \in Q : f(y) < m_f (s,Q)\}|\le s|Q| <t|Q|$,
(1.5) holds.

(ii) Up to a set of measure zero $\{y\in Q:   g(y) < m_f (s,Q)\}\subset \{y\in Q: f(y) < m_f (s,Q)\}$.
Therefore $|\{y\in Q:   g(y) < m_f (s,Q)\}|\le  s| Q|$, and so $ m_f (s,Q)\le  m_g (s,Q)$.

(iii) Since  $\{y\in Q:   f(y) < m_f (s,Q)\}= \{y\in Q: f(y)-c < m_f (s,Q)-c\}$
 it readily follows that   $ m_f (s,Q)-c \le m_{f-c}(s,Q)$.
  And since  $\{y\in Q: f(y)-c < m_{f-c} (s,Q)\}=  \{y\in Q: f(y) < m_{f-c} (s,Q)+c\}$,
 $  m_{f-c} (s,Q)+c\le    m_f (s,Q)$. Note that, in particular,  $ m_c (s,Q)=c$.

(iv)  For the sake of argument suppose that $f,g$ are measurable functions on  $Q$
such that $m_{f+g}(t,Q)-( m_f(s,Q)+m_g(s_1,Q))> 2\eta>0$.
Then $\{y \in Q : f(y) < m_f (s,Q)+\eta\}\cap \{y\in Q: g(y)< m_g(s_1,Q)+\eta\}\subset
\{y\in Q: f(y)+g(y)< m_{f+g}( t,Q)\}$. Now, since
$|\{y \in Q : f(y) < m_f (s,Q)+\eta\}|\ge s|Q|$, $| \{y\in Q: g(y)< m_g(s_1,Q)+\eta\}|\ge s_1|Q|$,
and $|\{y \in Q : f(y)+g(y) < m_f (t,Q)\}|\le t|Q|$, it readily follows that
$  s+s_1\le 1+t$, which is not the case.

(v) Since $-|f|\le f \le |f|$, by (1.6),  $m_{-|f|}(s, Q)\le m_f(s,Q)\le m_{|f|}(s,Q)$. Now,
if $m_f (s,Q) \le  0$ note that  $\{y\in Q:  | f(y)| <- m_f (s,Q)\}= \{y\in Q:    m_f (s,Q)<-|f(y)|\}\subset
\{y\in Q:    m_f (s,Q)<f(y)\}$, and, therefore, $|\{y\in Q:  | f(y)| <- m_f (s,Q)\}|\le
(1-s)|Q|$. Consequently,  $ |m_f (s,Q)|\le m_{|f|}(1-s,Q)$.
And, since for
$s\ge 1/2$, $(1-s)\le s$, by (1.5)    we have
$| m_{ f} (s,Q)|\le m_{|f|}(s,Q)$   for that range of $s$.

(vi) We may assume that $m_{f}(s, Q)\ne 0$.  Then by (1.4) and
Chebychev's inequality,
\[ (1-s)|Q|\le |\{y \in Q : f(y) \ge m_f (s,Q)\}|\le\frac{1}{ m_f (s,Q)}\,\int_Q  f(y) \,dy\,,
\]
and the conclusion follows.
\end{proof}

The restriction   $1/2\le s<1$  is necessary for  (1.10) to hold.
Let $ Q = [0, 1]$  and $f(x) = -2\chi_{[0, 1/2 )}(x) + \chi_{[ 1/2 ,1)}(x)$; then
for $0<s<1/2$,  $ m_f (s,Q)
=-2$  but $ m_{| f|}( s ,Q) = 1 <  2$.
And, in contrast to averages, the restriction  $0< t < s+s_1- 1$
is necessary for (1.8) to hold. To see this  let $Q = [0, 1]$, and pick
 $1/2< s_1\le s<1$,  and  $t=s+s_1-1>0$.
If $f=\chi_{[0, 1-s]}$ and  $g=\chi_{[1-s_1,2(1-s_1)]}$,
 $\{y\in Q: f(y)+g(y)<1\}=(1-s,1-s_1)\cup(  2(1-s_1),1]$ has
 measure $(s -s_1)+1-2(1-s_1)=t $, and, therefore,
 although $m_f (s,Q) = m_g(s_1, Q) = 0$, $m_{f+g}(t,Q)=1$.

Finally,   maximal medians can be expressed in terms of distribution functions  or
nonincreasing rearrangements.
 Recall that the nonincreasing rearrangement $f^*$
of $f$ at level $\lambda>0$ is given by
$   f^*(\lambda)=\inf \{\alpha>0: |\{x\in \R^n: |f(x)|>\alpha \} | \le \lambda \}$
and satisfies 
\begin{equation}
|\{ y\in \R^n:|f(y)|>f^*(u)\}|\le  u\,,\quad u> 0\,.
\end{equation}

We then have
\begin{proposition} Let $Q\subset \R^n$ be a cube, $0<s<1 $, and $f$ a measurable
function on $Q$. Then
\begin{equation*}
m_{|f|}(1-s,Q)=\inf\{\alpha>0:|\{y\in Q:|f(y)|>\alpha\}|<s|Q|\}= (f\chi_Q)^*(s|Q|)\,.
\end{equation*}
\end{proposition}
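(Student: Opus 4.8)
The plan is to prove the two claimed equalities separately, working with the three quantities
\[
m_{|f|}(1-s,Q),\qquad I:=\inf\{\alpha>0:|\{y\in Q:|f(y)|>\alpha\}|<s|Q|\},\qquad (f\chi_Q)^*(s|Q|).
\]
First I would dispose of the second equality, $I=(f\chi_Q)^*(s|Q|)$. By the very definition of the nonincreasing rearrangement recalled in the excerpt, $(f\chi_Q)^*(s|Q|)=\inf\{\alpha>0:|\{y\in\R^n:|f(y)\chi_Q(y)|>\alpha\}|\le s|Q|\}$, so the only discrepancy with $I$ is $\le$ versus $<$ in the defining inequality. These two infima coincide because the distribution function $\lambda\mapsto|\{|f\chi_Q|>\lambda\}|$ is right-continuous and nonincreasing: if $|\{|f\chi_Q|>\alpha\}|\le s|Q|$ then for every $\beta>\alpha$ we have $|\{|f\chi_Q|>\beta\}|\le|\{|f\chi_Q|>\alpha\}|\le s|Q|$, and I will argue the strict inequality holds for such $\beta$ by a short limiting argument (otherwise the distribution function would be constant equal to $s|Q|$ on an interval to the right of $\alpha$, contradicting right-continuity combined with minimality of the infimum); hence the infimum is unchanged when we pass from $\le$ to $<$. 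This step is routine.

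The substantive part is the first equality, $m_{|f|}(1-s,Q)=I$. I would prove two inequalities. For $m_{|f|}(1-s,Q)\le I$: fix any $\alpha$ with $|\{y\in Q:|f(y)|>\alpha\}|<s|Q|$. I claim $\alpha\ge m_{|f|}(1-s,Q)$, equivalently $\alpha$ satisfies the lower-median condition that $m_{|f|}(1-s,Q)$ is the supremum of. Using the defining property (1.3) of the maximal median with parameter $1-s$ applied to $|f|$, namely $|\{y\in Q:|f(y)|\le m_{|f|}(1-s,Q)\}|\ge(1-s)|Q|$, i.e. $|\{|f|>m_{|f|}(1-s,Q)\}|\le s|Q|$ — here I need the companion bound $|\{y\in Q:|f(y)|> m_{|f|}(1-s,Q)\}|\le s|Q|$, which is exactly condition (1.2) (the ``as for (1.2)'' paragraph, with $s$ there equal to $1-s$, giving the bound $(1-(1-s))|Q|=s|Q|$). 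So suppose toward a contradiction $\alpha<m_{|f|}(1-s,Q)$; then since $m_{|f|}(1-s,Q)=\sup\{M:|\{|f|<M\}|\le(1-s)|Q|\}$ and $\alpha$ is strictly below this sup, there exists $M$ with $\alpha<M$ and $|\{|f|<M\}|\le(1-s)|Q|$, hence $|\{|f|\ge M\}|\ge s|Q|$, hence $|\{|f|>\alpha\}|\ge|\{|f|\ge M\}|\ge s|Q|$, contradicting the choice of $\alpha$. Taking the infimum over all such $\alpha$ gives $m_{|f|}(1-s,Q)\le I$.

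For the reverse inequality $I\le m_{|f|}(1-s,Q)$: I use (1.2) with parameter $1-s$ to get $|\{y\in Q:|f(y)|>m_{|f|}(1-s,Q)\}|\le s|Q|$, and then for any $\beta>m_{|f|}(1-s,Q)$ I want $|\{|f|>\beta\}|<s|Q|$, which would give $I\le\beta$ and then $I\le m_{|f|}(1-s,Q)$ on letting $\beta$ decrease. By monotonicity $|\{|f|>\beta\}|\le|\{|f|>m_{|f|}(1-s,Q)\}|\le s|Q|$; to upgrade to the strict inequality, suppose $|\{|f|>\beta\}|=s|Q|$ for all $\beta$ in some interval $(m_{|f|}(1-s,Q),\gamma)$. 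Pick $\beta_0$ in this interval; then $|\{|f|<\beta_0\}|=|Q|-|\{|f|\ge\beta_0\}|\le|Q|-|\{|f|>\beta_0\}|=(1-s)|Q|$, so $\beta_0$ is an admissible value in the supremum defining $m_{|f|}(1-s,Q)$, forcing $\beta_0\le m_{|f|}(1-s,Q)$ — a contradiction. Hence the strict inequality holds and $I\le m_{|f|}(1-s,Q)$, completing the chain of equalities. The only point requiring care — and the one I would expect to be the main obstacle — is this repeated conversion between $\le$ and $<$ in the distribution-function inequalities, i.e. making sure the infimum defining $I$ is genuinely attained/approximated from the strict side; everywhere else one is just unwinding the definitions of maximal median and rearrangement together with monotonicity of the distribution function.
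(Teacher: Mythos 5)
Your plan of proving the two equalities separately is reasonable, and your direct two-sided argument for the first equality, $m_{|f|}(1-s,Q)=I$, is correct and self-contained. (The paper instead proves $m_{|f|}(1-s,Q)\le I$ and then closes a three-term cycle by showing $(f\chi_Q)^*(s|Q|)\le m_{|f|}(1-s,Q)$; your version, showing $m_{|f|}(1-s,Q)\le I$ and $I\le m_{|f|}(1-s,Q)$ directly, is a legitimate alternative and arguably cleaner for that part.)

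The ``short limiting argument'' you invoke for the second equality, $I=(f\chi_Q)^*(s|Q|)$, is however a genuine gap, and it sits exactly at the spot you yourself flagged as the main obstacle. You assert that if $|\{|f\chi_Q|>\alpha\}|\le s|Q|$ then $|\{|f\chi_Q|>\beta\}|<s|Q|$ for every $\beta>\alpha$, because otherwise the distribution function ``would be constant equal to $s|Q|$ on an interval to the right of $\alpha$, contradicting right-continuity combined with minimality of the infimum.'' But a nonincreasing right-continuous function is perfectly free to be constant on an interval; right-continuity rules out certain jumps, not flat stretches, and ``minimality of the infimum'' supplies no contradiction. Concretely, take $Q=[0,1]$, $f=\chi_{[0,1/2)}$, $s=1/2$: then $|\{y\in Q:|f(y)|>\alpha\}|=1/2=s|Q|$ for every $\alpha\in[0,1)$, so $I=1$ whereas $(f\chi_Q)^*(1/2)=0$, and the second equality fails outright (indeed $m_{|f|}(1/2,Q)=1$ as well). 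This level-set degeneracy is the true obstruction and cannot be removed by a limiting argument; the two infima in the definition of $I$ and of $(f\chi_Q)^*(s|Q|)$ genuinely differ when the distribution function of $f\chi_Q$ sits at height exactly $s|Q|$ on a nondegenerate interval. It is worth noting that the paper's own proof has the same thin spot: it passes from (1.12), which gives only the non-strict bound $|\{y\in Q:|f(y)|>(f\chi_Q)^*(s|Q|)\}|\le s|Q|$, to the conclusion $I\le(f\chi_Q)^*(s|Q|)$, a step which needs the strict inequality $<$ and so is unjustified in exactly the same degenerate situation. So you have correctly located a real subtlety in the statement, but the argument you offer to dispose of it does not hold up; only the first equality is robust without an extra nondegeneracy hypothesis.
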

\begin{proof}
Let $\overline \alpha =\inf\{\alpha>0:|\{y\in Q:|f(y)|>\alpha\}|<s|Q|\}$.
Then for all $\varepsilon > 0$  it readily follows that
$|\{y \in Q : |f (y) | \le \overline \alpha  + \varepsilon\}| >  (1 - s)|Q|$,
which together with (1.3)   implies
$ m_{|f |} (1-s,Q)\le \overline \alpha  +\varepsilon$. Thus 
  $ m_{|f  |} (1 - s, Q)\le \overline \alpha $.

Next, by (1.12),  $|\{ y\in Q:|f(y)|>(f\chi_Q)^*(s|Q|)\}|\le  s|Q|$, which
gives $\overline \alpha \le  (f\chi_Q)^*(s|Q|)$.

Finally, since  for $\varepsilon>0$, $|\{ y\in Q:|f(y)|>(f\chi_Q)^*(s|Q|)-\varepsilon \}|> s|Q|$,
by (1.2) it readily follows that $(f\chi_Q)^*(s|Q|)-\varepsilon < m_{|f|}(1-s,Q)$, and,
consequently,
$(f\chi_Q)^*(s|Q|)\le m_{|f|}(1-s,Q)$.
\end{proof}

Other equivalent expressions appearing in the literature include those in \cite{CSS}.

\section{Convergence of medians}
 The  examples following Proposition 1.1 suggest that medians rely more heavily on the
distribution of the values of $f$ than do averages.
On the other hand, averages and medians are not always at odds. In particular,
using (1.11) the reader should have no difficulty in  verifying that
the following version of the Lebesgue differentiation theorem holds:
If $f$ is a locally integrable function     on $\R^n$ and $1/2\le  s < 1$,  then
\[\lim_{x\in Q,\, Q\to x} m_f (s,Q) = f(x)\]
at every  Lebesgue point $x$   of $f$.

Thus,  in some sense $m_f (s,Q)$ is a good substitute for $f_Q$ for  small  $Q$.
 In fact, a more careful argument gives that the  biased maximal medians
$m_f(s,Q)$ of an arbitrary measurable function
$f$ converge to $f$ a.e., a fact observed by Fujii for the case $s=1/2$ \cite{F}.

\begin{theorem} Let $f$ be a real-valued, finite a.e.\!  measurable function on $\R^n$, and $0 < s < 1$.
Then
\begin{equation}\lim_{x\in Q,\,Q\to x}m_f (s,Q) = f(x)\text { a.e.}
\end{equation}

In particular, {\rm{(2.1)}} holds at every point of continuity $x$ of $f$.
\end{theorem}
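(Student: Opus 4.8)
The plan is to reduce the a.e.\ statement to a maximal function estimate exactly as in the classical Lebesgue differentiation theorem. The natural object to control is the ``median maximal oscillation'' operator
\[
\omega f(x) = \limsup_{x\in Q,\,Q\to x} \bigl| m_f(s,Q) - f(x)\bigr|,
\]
and the goal is to show $\omega f = 0$ a.e. First I would dispose of the elementary cases: for a bounded uniformly continuous $f$ the convergence is immediate from the definition of median, since if $|f(y)-f(x)|<\varepsilon$ on $Q$ then by monotonicity and the constant property (1.7) one gets $|m_f(s,Q)-f(x)| = |m_{f-f(x)}(s,Q)| \le \varepsilon$; more generally this handles every point of continuity of $f$, which gives the last sentence of the theorem once the main assertion is known (or directly, since continuity points need no maximal function argument). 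The real content is the a.e.\ statement for a general finite-a.e.\ measurable $f$.

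The key step is a weak-type bound for the sublinear operator $f \mapsto \sup_{x\in Q,\,Q\to x}|m_f(s,Q)|$ that lets us pass from a dense class to all of $f$. Here is where Proposition 1.2 does the work: by that proposition, for $1/2 \le s' < 1$ we have $m_{|g|}(s',Q) = (g\chi_Q)^*\!\bigl((1-s')|Q|\bigr) \le \tfrac{1}{1-s'}\,\tfrac1{|Q|}\int_Q|g|$, i.e.\ the median is dominated by a constant multiple of the Hardy–Littlewood average; combined with property (v) (relation (1.10)), $|m_g(s',Q)| \le m_{|g|}(s',Q)$ for $s'\ge 1/2$, so for $s\ge 1/2$ the maximal biased median is pointwise $\lesssim Mg$, the Hardy–Littlewood maximal function, which is of weak type $(1,1)$. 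For $0<s<1/2$ one uses property (i): $m_g(s,Q) \le m_g(1/2,Q)$ and $m_{-g}(s,Q)$ is handled symmetrically via $m_{-|g|}(s,Q) \ge -m_{|g|}(1-s,Q)$ from the proof of (v), again reducing to $m_{|g|}(1-s,Q) = (g\chi_Q)^*((1-(1-s))|Q|) = (g\chi_Q)^*(s|Q|) \le \tfrac1s (Mg)(x)$. Either way the maximal biased median operator is bounded by $C_s\,M$ pointwise, hence of weak type $(1,1)$.

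With the weak-type bound in hand the argument is routine: given $\varepsilon>0$, truncate $f$ to $f\mathbf{1}_{|f|\le N}$ on a large ball, which is integrable; split $f = g + h$ with $g$ continuous (or a step function) and $\|h\|_{L^1}$ small; apply the already-established convergence for $g$; use property (iv) — the subadditivity $m_{f}(t,Q) \le m_g(s,Q) + m_h(t',Q)$ for suitable parameters, or more simply the two-sided bound $|m_f(s,Q)-m_g(s,Q)|$ controlled by $m_{|f-g|}(s'',Q)$-type quantities — to bound $\omega f \le \omega g + C\, (M h)^{\ast\ast}$-type error; and conclude $|\{\omega f > \varepsilon\}| \le C_s \|h\|_1/\varepsilon$, which is arbitrarily small. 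Hence $\omega f = 0$ a.e.

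The main obstacle is the sublinearity bookkeeping in the splitting step: medians are not additive, so ``$m_f - m_g$'' must be estimated through Proposition 1.1(iv), which only gives $m_{f+g}(t,\cdot) \le m_f(s,\cdot)+m_g(s_1,\cdot)$ when $t < s+s_1-1$, forcing the parameters $s, s_1$ to be close to $1$ and the error-median parameter to drift. The clean fix is to avoid subtracting medians altogether: write $m_f(s,Q) \le m_{g}(s_1,Q) + m_{|f-g|}(s_2,Q)$ with $s_1,s_2$ chosen so $s < s_1 + s_2 - 1$ and $s_1$ arbitrarily close to $s$ (taking $s_2$ close to $1$), and symmetrically for the lower bound; then $m_{|f-g|}(s_2,Q) = (( f-g)\chi_Q)^*((1-s_2)|Q|) \le \tfrac1{1-s_2}(M(f-g))(x)$ by Propositions 1.2 and the Chebyshev estimate (1.11), which is again weak-$(1,1)$-controlled. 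Letting $s_1\downarrow s$ at the end (using (i) for continuity in the parameter, plus the fact that the limit function $g$ is continuous so its maximal median limit does not see the parameter) closes the estimate. Everything else is the standard Lebesgue-differentiation template.
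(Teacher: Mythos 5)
The proposal takes a genuinely different route from the paper's, and it has a real gap.

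\emph{What the paper does.} The paper's proof is direct and elementary: discretize the range of $f$ by dyadic levels, so that the step function $S_k=\sum_j a_{k,j}\chi_{E_{k,j}}$ satisfies $0\le f-S_k\le 2^{-k}$, whence $m_{S_k}(s,Q)\le m_f(s,Q)\le m_{S_k}(s,Q)+2^{-k}$ for \emph{every} cube $Q$. Then at a density point $x$ of the level set $E_{k,j}$ one shows $m_{S_k}(s,Q)=a_{k,j}$ for all sufficiently small $Q\ni x$, and the estimate closes in two lines. The only analytic input is the Lebesgue density theorem, and the argument treats all $0<s<1$ symmetrically via $\delta=\max\{s,1-s\}$. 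In particular it applies to any measurable $f$ that is merely finite a.e., with no integrability whatsoever.

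\emph{Where the proposal breaks.} Your plan is the weak-type Lebesgue-differentiation template: dominate the biased-median maximal operator by the Hardy--Littlewood maximal function $M$ via Propositions 1.1--1.2, split off a continuous piece, and run the dense-class argument. The domination $|m_g(s',Q)|\le C_{s'}\,\frac{1}{|Q|}\int_Q|g|\le C_{s'}\,Mg(x)$ is correct, and your acknowledgement and fix of the non-additivity of medians (replace subtraction by $m_f(s,Q)\le m_g(s_1,Q)+m_{|f-g|}(s_2,Q)$ with $s<s_1+s_2-1$, then use that a continuous $g$ does not see the parameter in the limit) is the right idea and can be made rigorous. But the hypothesis of the theorem is only that $f$ is measurable and finite a.e.; it is \emph{not} locally integrable. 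For $f(x)=|x|^{-n}$ near the origin, say, $f_Q=\infty$ on cubes meeting $0$, so $Mf\equiv\infty$ there while all the medians $m_f(s,Q)$ are perfectly finite. Thus the bound $\mathcal M_s f\lesssim Mf$ is vacuous and the dense-class machine cannot start. You gesture at the cure (``truncate $f$ to $f\mathbf 1_{|f|\le N}$''), but the crucial transfer from $m_{f_N}(s,Q)$ back to $m_f(s,Q)$ is left unaddressed. That transfer is not formal: on a small cube $Q$ around a density point of $\{|f|\le N\}$ you only get the two-sided estimate $m_{f_N}(s-\epsilon',Q)\le m_f(s,Q)\le m_{f_N}(s+\epsilon',Q)$ with $\epsilon'=|Q\setminus\{|f|\le N\}|/|Q|$, and you then need the $L^1$ result at a family of parameters $s\pm\epsilon'$ together with monotonicity in the parameter to let $\epsilon'\downarrow 0$. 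Once you write that out, you are essentially redoing the density-point argument that the paper uses directly and once, without the $L^1$ detour. So the gap is genuine: the step you label ``routine'' is precisely the place where the median setting diverges from the classical Lebesgue differentiation theorem, and it is the reason the paper chooses the step-function/density proof over the weak-$(1,1)$ template.

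One minor point: Proposition 1.1(iv) requires $f,g\ge0$, so the parameter-drift inequality must be routed through $f\le g+|f-g|$ and (ii), not applied to $g$ and $h$ directly; your phrasing ``$m_f(t,Q)\le m_g(s,Q)+m_h(t',Q)$'' is a bit too loose, although the fix is easy and in the spirit of what you wrote.

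Your first paragraph --- the convergence at points of continuity via $|m_{f-f(x)}(s,Q)|\le\varepsilon$ --- is correct and matches the paper's argument for that clause.
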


\begin{proof}   For $k\ge 1$ and an integer $j$,  let $E_{k, j} =\{
x\in\R^n: (j- 1)/2^k\le  f(x) < j/2^k\}$,   $a_{k,j}=(j-1)/2^k$, and put
$S_k(x)= \sum_{j=-\infty}^\infty a_{k,j} \chi_{E_{k,j}}(x)$. Note that
since $f$ is finite a.e.,    $\R^n=\bigcup_{k,j} E_{k,j}$ except possibly for a set of measure $0$, and
when $f(x)$ is finite  we have $0\le f(x) -S_k(x)\le  2^{-k}$,
 which gives $ m_{S_k} (s,Q)\le m_f (s,Q)\le m_{S_k}(s,Q)+2^{-k}$ for all cubes $Q$.
Let   $A_{k,j} =\{x \in E_{k,j}: x$ is a point of density for $E_{k,j}\}$, $A_k=\bigcup_{j=-\infty}^\infty A_{k,j}$.
Since $f$ is finite a.e., $|\R^n\setminus A_k|=0$ for all $k$, and if $A=\bigcup_{k=1}^\infty A_k$, also
$|\R^n\setminus A|=0$.

We claim that the limit in question exists for $x\in A$.
Given $\varepsilon>0$, pick $k$ such that $2^{-k+1}<\varepsilon$.
Then  $x\in A_{k,j}$ for some $j$, and
\[\lim_{x\in Q,\,Q\to x} \frac{| A_{k,j}\cap Q |}{|Q|}=1\,.
\]

Let 
$ \delta= \max \{s,1-s\}$  and note that for all
cubes $Q$ with small enough     measure containing $x$,
\[\frac{| A_{k,j} \cap Q|}{|Q|}> \delta\,.
\]

We restrict our attention to such small cubes $Q$ containing $x$.
 Note that for these cubes    $m_{S_k}(s,Q)= a_{k,j}$.
 Indeed, on the one hand, since $ S_k(y)=a_{k,j}$ for $y\in A_{k,j}$,
 $|\{y\in Q: S_k(y)< a_{k,j}\}|\le |A_{k,j}^c\cap Q|< s|Q|$, and,
 therefore, $a_{k,j}\le m_{S_k}(s,Q)$.
  And, on the other,
 since for $\varepsilon>0$, $\{y\in Q: S_k(y)< a_{k,j}+\varepsilon\}\supset A_{k,j}\cap Q$,
it follows that $|\{y\in Q: S_k(y)< a_{k,j}+\varepsilon\}|\ge | A_{k,j}\cap Q|\ge  s |Q|$.
Hence, $m_{S_k}(s,Q)\le a_{k,j}+\varepsilon$, and since $\varepsilon$
 is arbitrary, $m_{S_k}(s, Q)\le a_{k,j}\,$.

Then, since $ a_{k,j}= m_{S_k}(s, Q)=S_{k}(x)$ for $x\in A_{k,j}\,$,
\begin{align*} |m_f(s,Q)- f(x)| &\le  |m_f(s,Q) -m_{S_k}(s,Q)| +|m_{S_k}(s,Q) - f(x)|\\
                                &\le 2^{-k}+ (f(x)- S_k(x))
                                \le 2^{-k+1}<\varepsilon \,.
                                \end{align*}
In other words, $|m_f(s,Q)- f(x)|<\varepsilon$ for $x\in A$ and all $Q$
with small enough measure  containing $x$.

Now, at a point  of continuity $x$ of  $f$,
given $\varepsilon>0$, let  $\delta > 0$ be  such that  $|f(y) -f(x)| \le \varepsilon$ for
$y \in B(x, \delta)$. Then  for $y$ in a cube $Q$ containing $x$ and contained
in $B(x, \delta)$  we have $- \varepsilon\le   f(y) -f(x)   \le \varepsilon$, and,
consequently, $-\varepsilon = m_{-\varepsilon}(s,Q)\le  m_{ f-f(x)}(s,Q)= m_f(s,Q)-f(x)
\le m_\varepsilon(s,Q) =\varepsilon$, and so $| m_f(s,Q)-f(x)|\le \varepsilon$.
\end{proof}

\section{A median characterization for continuity}

 We say that a measurable
function $f$  on a cube $Q_0\subset \R^n$ is equivalent to a continuous function on $Q_0$
if the values of $f$ can be modified on a set of Lebesgue measure $0$ so
as to coincide with a continuous function on $Q_0$; similarly for
$f$  equivalent to a bounded function on a cube.
 In this section we 
characterize  those  measurable  functions equivalent to a bounded function
on a cube that are equivalent to a continuous function on that cube in terms of   medians,
keeping in mind that in the case of locally integrable functions
the condition involves the consideration of oscillations involving two
nonoverlapping cubes \cite{ST}.

\begin{defin}
For $0<s<1$ and nonoverlapping cubes $Q_1,Q_2\subset \R^n$,   let
\[\Psi_s(f, Q_1, Q_2) = \frac{|Q_1|}{|Q_1 \cup Q_2|}\, m_f(s, Q_1) + \frac{|Q_2|}{|Q_1 \cup Q_2|}\, m_f(s,Q_2)\,,\]
and
\[  {\Omega}(f, s,\delta)  =  \sup_{ {\rm diam}(Q_1 \cup Q_2) \le  \delta}\inf_c\,
 \Psi_s (|f - c|, Q_1, Q_2) \,. \]
\end{defin}

 $\Psi_s$ is a weighted average of maximal   medians of $f$ in the spirit of averages and
 ${\Omega}(f, s,\delta) $ is related to the oscillation of a measurable  function
 on a cube, as shown by the following result.


\begin{theorem} Let $Q_0\subset \R^n$ be a cube, $1/2<s<1$,    $f$ a measurable function
on $Q_0$ that is equivalent to a bounded function there, and
 $\omega (f,\delta)$, $\delta > 0$,  the essential modulus of continuity of $f$ defined by
\[\omega (f,\delta)  = \sup_{|h| \le  \delta} \big({\rm ess\ sup}_{x, x+h \in Q_0}|f(x+h) - f(x)|\big) .
\]
Then we have $ \Omega(f, s,\delta)=\omega(f,\delta)/2 $.
\end{theorem}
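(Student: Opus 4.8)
The plan is to establish the two inequalities $\Omega(f,s,\delta)\le\omega(f,\delta)/2$ and $\Omega(f,s,\delta)\ge\omega(f,\delta)/2$ separately; write $\omega=\omega(f,\delta)$. Since $|f-c|\ge0$, every $m_{|f-c|}(s,Q_i)$ is nonnegative, so $\Omega(f,s,\delta)\ge0$ and the case $\omega=0$ is covered by the upper bound. For the lower bound it is convenient first to observe that the supremum defining $\omega$ may be taken over $|h|<\delta$: since $f$ is equivalent to a bounded function on $Q_0$ it lies in $L^1(Q_0)$ and translation is $L^1$-continuous, so if $h_0$ with $|h_0|\le\delta$ nearly realizes $\omega$, choosing $h_k\to h_0$ with $|h_k|<\delta$ and passing to a subsequence along which $f(\cdot+h_k)\to f(\cdot+h_0)$ a.e.\ shows that some such $h_k$ realizes essentially the same value.

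For the upper bound, fix nonoverlapping cubes $Q_1,Q_2\subset Q_0$ with ${\rm diam}(Q_1\cup Q_2)\le\delta$ and put $M={\rm ess\,sup}_{Q_1\cup Q_2}f$, $m={\rm ess\,inf}_{Q_1\cup Q_2}f$, both finite. Any two points of $Q_1\cup Q_2$ lie within $\delta$ of each other, so the definition of $\omega$ gives $|f(x)-f(y)|\le\omega$ for a.e.\ pair $x,y\in Q_1\cup Q_2$, whence $M-m\le\omega$. With $c=(M+m)/2$ we have $|f-c|\le(M-m)/2\le\omega/2$ a.e.\ on $Q_1\cup Q_2$, so by the monotonicity property (1.6) and $m_c(s,Q)=c$ we get $m_{|f-c|}(s,Q_i)\le\omega/2$ for $i=1,2$, and hence $\Psi_s(|f-c|,Q_1,Q_2)\le\omega/2$. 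Taking the infimum over $c$ and the supremum over admissible $Q_1,Q_2$ gives $\Omega(f,s,\delta)\le\omega/2$.

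For the lower bound assume $\omega>0$ and fix $\varepsilon\in(0,\omega/2)$. Pick $h\ne0$ with $|h|<\delta$ and ${\rm ess\,sup}_x|f(x+h)-f(x)|>\omega-\varepsilon$; replacing $h$ by $-h$ if necessary we may assume $f(x+h)-f(x)>\omega-\varepsilon$ on a set of positive measure. Slicing that set along a sufficiently fine partition of the bounded essential range of $f$ yields reals $p<q$ with $q-p>\omega-2\varepsilon$ and a set $E$, $|E|>0$, with $f\le p$ on $E$ and $f\ge q$ on $E+h$. Let $x_0$ be a point of density of $E$ and choose a cube $Q_1\ni x_0$ so small that $|E\cap Q_1|>(1-s)|Q_1|$, the side of $Q_1$ is less than $\max_i|h_i|$, and ${\rm diam}(Q_1\cup(Q_1+h))\le\delta$; set $Q_2=Q_1+h$. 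Then $Q_1,Q_2$ are nonoverlapping cubes of equal measure with ${\rm diam}(Q_1\cup Q_2)\le\delta$, and $|\{y\in Q_1:f(y)\le p\}|\ge(1-s)|Q_1|$, $|\{y\in Q_2:f(y)\ge q\}|\ge(1-s)|Q_2|$. By Proposition 1.2 (read with $1-s$ in place of $s$), $m_{|g|}(s,Q)\ge\alpha$ whenever $|\{y\in Q:|g(y)|>\beta\}|\ge(1-s)|Q|$ for every $\beta<\alpha$; applying this to $g=f-c$, and noting that $f\le p$ forces $|f-c|>\beta$ for $\beta<c-p$ and $f\ge q$ forces $|f-c|>\beta$ for $\beta<q-c$, we obtain $m_{|f-c|}(s,Q_1)\ge(c-p)^+$ and $m_{|f-c|}(s,Q_2)\ge(q-c)^+$ for every $c$. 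Since $|Q_1|=|Q_2|$, $\Psi_s(|f-c|,Q_1,Q_2)=\tfrac12\big(m_{|f-c|}(s,Q_1)+m_{|f-c|}(s,Q_2)\big)\ge\tfrac12\big((c-p)^++(q-c)^+\big)\ge\tfrac12(q-p)$ for every $c$, the last step being clear on examining $c\le p$, $p\le c\le q$, and $c\ge q$. Hence $\Omega(f,s,\delta)\ge\tfrac12(q-p)>\omega/2-\varepsilon$, and letting $\varepsilon\to0$ gives $\Omega(f,s,\delta)\ge\omega/2$.

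The substantive part is the lower bound; the upper bound is just the choice of centering constant and monotonicity of medians, and neither half really uses $s>1/2$ (that hypothesis is what matters for the continuity statements derived from this identity rather than for the identity itself). The delicate points are: extracting from the \emph{essential} modulus of continuity a genuine gap $p<q$ with $q-p$ close to $\omega$ carried by a set of positive measure (the slicing step, where the essential supremum in the definition of $\omega$ must be handled carefully); arranging the two cubes to be nonoverlapping \emph{and of equal size}, since it is exactly the equal weights $1/2$ in $\Psi_s$ that produce $\tfrac12(q-p)$ and not a smaller multiple of $q-p$; and keeping ${\rm diam}(Q_1\cup Q_2)\le\delta$, which is why one reduces to $|h|<\delta$ at the outset, a nondegenerate pair of cubes near $x_0$ and $x_0+h$ having union-diameter strictly larger than $|h|$.
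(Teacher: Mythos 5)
Your proof is correct, and while the upper bound is essentially the paper's (choose $c$ to be the midpoint of the essential range on $Q_1\cup Q_2$, note $|f-c|\le\Theta/2$ a.e., and use monotonicity of medians), your lower bound takes a genuinely different route. The paper works with the function $|f(\cdot+h)-f|$ directly: it uses translation-invariance of medians together with the median quasi-triangle inequality (1.8), valid for $0<t<2s-1$, to obtain
\[\tfrac12 m_{|f-c|}(s,Q(x,a))+\tfrac12 m_{|f-c|}(s,Q(x+h,a))\ \ge\ \tfrac12 m_{|f(\cdot+h)-f|}(t,Q(x,a)),\]
and then bounds the right side from below by taking $x$ a density point of $E=\{x:|f(x+h)-f(x)|\ge\omega-\varepsilon\}$. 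You instead eliminate the constant $c$ by hand: slicing the (bounded) essential range of $f$ to extract reals $p<q$ with $q-p$ close to $\omega$ and a positive-measure set $E$ with $f\le p$ on $E$ and $f\ge q$ on $E+h$, so that for \emph{every} $c$ at least one of $m_{|f-c|}(s,Q_1)\ge(c-p)^+$, $m_{|f-c|}(s,Q_2)\ge(q-c)^+$ forces $\Psi_s\ge\tfrac12(q-p)$. This avoids (1.8) entirely (using only monotonicity and the rearrangement description of Proposition 1.2), and, as you correctly observe, it also dispenses with the hypothesis $s>1/2$: the paper's choice of $t$ with $0<t<2s-1$ is what imposes $s>1/2$ there, whereas your density-point estimate $|E\cap Q_1|/|Q_1|>1-s$ works for any $0<s<1$. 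So your argument is both more elementary in its prerequisites and slightly more general in scope; the paper's argument, by contrast, showcases the algebraic machinery (1.8) for medians, which is used elsewhere in the paper.

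Two small points you might tighten. First, your justification for replacing $\sup_{|h|\le\delta}$ by $\sup_{|h|<\delta}$ via $L^1$-continuity of translation is a bit terse; the paper skips this issue altogether, and a cleaner route is the one the paper's final lines gesture at, namely first proving $\Omega(f,s,\delta')\ge\omega(f,\delta)/2$ for every $\delta'>\delta$ using any $|h|\le\delta$, then intersecting. Second, when choosing $x_0$ you should (and implicitly do) take it so that both $x_0$ and $x_0+h$ lie in the interior of $Q_0$, which is possible for a.e.\ point of $E$ since $E\cap(\partial Q_0-h)$ is null; this guarantees $Q_1,Q_2\subset Q_0$ for small $Q_1$. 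Neither point is a gap, just a place where the write-up could be made airtight.
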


\begin{proof}
For nonoverlapping cubes $Q_1, Q_2 \subset Q_0$, let
\[\Theta = {\rm ess\ osc} (f, Q_1 \cup Q_2) = {\rm ess\ sup}_{Q_1 \cup Q_2} f - {\rm ess\ inf}_{Q_1 \cup Q_2} f\,.\]
Let $\delta>0$. If $x \in Q_1 \cup Q_2 \subset Q_0$ is such that $x + h \in Q_0$  where $|h|\le \delta$,
  since
\[{\rm ess\ sup}_{x, x+h \in Q_0}|f(x+h) - f(x)|  \ge {\rm ess\ sup}_{Q_1 \cup Q_2}f  - {\rm ess\ inf}_{Q_1 \cup Q_2}f\,,
\]
taking the sup over $|h|\le \delta$ it readily follows that
$\omega (f,\delta) \ge \Theta$.
Moreover, since for $y\in Q_1 $ and an arbitrary constant $c$,
\[|f(y)- c|\le\max \, \{ {\rm ess\ sup}_{Q_1 \cup Q_2} f-c, c-  {\rm ess\ inf}_{Q_1 \cup Q_2} f \},\]
picking $c=({\rm sup\ inf}_{Q_1 \cup Q_2} f + {\rm ess\ inf}_{Q_1 \cup Q_2} f)/2$, it follows
that $|f(y)-c|\le \Theta/2$, and, consequently, $ m_{|f-c|}(s, Q_1)\le  m_{\Theta/2} (s, Q_1)= \Theta/2$;
similarly we have $m_{|f-c|}(s, Q_2)\le \Theta/2$.
Therefore,
\[ \inf_c  \Psi_s( |f - c|, Q_1, Q_2) \le \frac{|Q_1|}{|Q_1 \cup Q_2|}\frac{ \Theta}{2}
+ \frac{|Q_2|}{|Q_1 \cup Q_2|}\frac{\Theta}{2}=\frac{\Theta}{2}\,,\]
and, consequently, $\Omega(f,s, \delta)\le \Theta/2 \le \omega(f,\delta)/2$.

Conversely,  let $0<t<2s-1$.
Then for fixed $\delta>0$,  given $\varepsilon > 0$,
 pick $h$ with $|h| < \delta$ such that
  ${\rm ess\ sup}_{x, x+h \in Q_0}|f(x+h) - f(x)|\ge  \omega(f,\delta) - \varepsilon$.
  Then   $E = \{x\in Q_0: x+h \in Q_0$ and $|f(x+h) - f(x)| \ge \omega(f,\delta) - \varepsilon\}$ has positive measure.
   Let $x\in E$ be a point of density of $E$  and $a$
small enough  so that $Q(x,a), Q(x+h, a)$ are nonoverlapping and
\[\frac{|E \cap Q(x,a)|}{|Q(x,a)|} > 1-t\,.\]
Now, since $\{y\in Q(x+h,a):g(y)<M\}=\{y\in Q(x,a): g(y+h)<M\}$ and $|Q(x,a)|=|Q(x+h, a)|$,
it readily follows that
$m_{|f - c|}(s, Q(x+h,a))=m_{|f (\cdot\, + h)- c|}(s, Q(x,a))$, and, consequently,
since $|f(y+h)-f(y)|\le |f(y+h)-c|+ |f(y)-c|$,
by (1.8) and (1.6),   $m_{|f - c|}(s, Q(x,a))+ m_{|f - c|}(s, Q(x+h,a))
\ge m_{|f-c|+ |f (\cdot\, + h)- c|}(t, Q(x,a))\ge m_{|f(\cdot\,+h)-f|}(t, Q(x,a))$.
Therefore,
\begin{align*}
\Psi_s( |f -& c|, Q(x,a), Q(x+h,a))\\
 &\ge  \frac{1}{2}m_{|f - c|}(s, Q(x, a)) + \frac{1}{2}m_{|f - c|}(s, Q(x+h,a))
\\
&\quad\quad\quad\quad\ge \frac{1}{2} m_{|f(\cdot\, + h) - f |}(t,Q(x,a))\,.
\end{align*}

Finally, since $|E \cap Q(x,a)|=|\{y\in Q(x,a): |f(y + h) - f(y) | \ge \omega(f,\delta) - \varepsilon\}|>
(1-t)|Q(x,a)|$,   it readily follows that $ m_{|f(\cdot\, + h) - f |}(t,Q(x,a))\ge
  \omega(f,\delta) - \varepsilon$, which, since $\varepsilon$ is arbitrary, implies
$\Psi_s( |f - c|, Q(x,a), Q(x+h,a))\ge  \omega(f,\delta)/2$.
Thus $\Omega(f,s, \delta + (\sqrt{2a})^n) \ge \omega(f,\delta) $, and letting $a \to 0$,
 $\Omega(f,s,\delta) \ge  \omega(f,\delta)/2$.
\end{proof}

\begin{theorem} Let $Q_0\subset \R^n$ be a cube, $1/2<s<1$, and  $f$ a measurable function
on $Q_0$ that is equivalent to a bounded function there.
 Then, $f$ is equivalent to a continuous function on   $Q_0$
 iff   $ \  \lim_{\eta \to 0^+} \Omega (f,s,\eta) = 0$.
\end{theorem}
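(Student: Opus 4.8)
The plan is to read the theorem off from Theorem 3.2. Modifying $f$ on a null set we may assume $f$ is bounded on $Q_0$; this changes neither $\Omega(f,s,\eta)$, nor $\omega(f,\eta)$, nor the property of being equivalent to a continuous function. Theorem 3.2 gives $\Omega(f,s,\eta)=\omega(f,\eta)/2$, so, since $\omega(f,\cdot)$ is nondecreasing, the hypothesis $\lim_{\eta\to0^+}\Omega(f,s,\eta)=0$ is exactly the condition that $f$ have \emph{vanishing essential modulus of continuity}, $\lim_{\eta\to0^+}\omega(f,\eta)=0$. The parameter $s$ thus plays no further role, and what remains is the classical statement that a bounded measurable function on $Q_0$ is equivalent to a continuous function iff its essential modulus of continuity tends to $0$.

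The forward implication is immediate: if $f=g$ a.e.\ with $g$ continuous, then $g$ is uniformly continuous on the compact cube $Q_0$, and $\omega(f,\eta)$ is bounded above by the ordinary modulus of continuity of $g$, which tends to $0$; hence $\Omega(f,s,\eta)=\omega(f,\eta)/2\to0$.

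For the converse, assume $\omega(f,\eta)\to0$ and construct the continuous representative directly. For $x\in Q_0$ and $r>0$ set $S(x,r)=\operatorname{ess\,sup}_{B(x,r)\cap Q_0}f$ and $I(x,r)=\operatorname{ess\,inf}_{B(x,r)\cap Q_0}f$; these are finite because $f$ is bounded, and monotone in $r$, so $S(x):=\lim_{r\to0^+}S(x,r)$ and $I(x):=\lim_{r\to0^+}I(x,r)$ exist with $I(x)\le S(x)$. A change of variables $(y,z)\mapsto(z,y-z)$ together with Fubini shows that whenever $S(x,r)-I(x,r)>t$ some translation vector $h$ with $|h|\le 2r$ realizes essential oscillation $>t$, so $S(x,r)-I(x,r)\le\omega(f,2r)$; letting $r\to0^+$ forces $I(x)=S(x)=:\tilde f(x)$. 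If $|x-x'|\le r$ then $B(x',\rho)\cap Q_0\subset B(x,r+\rho)\cap Q_0$ for every $\rho>0$, so $\tilde f(x)$ and $\tilde f(x')$ both lie in $[\,I(x,r+\rho),S(x,r+\rho)\,]$ and $|\tilde f(x)-\tilde f(x')|\le S(x,r+\rho)-I(x,r+\rho)\le\omega(f,2(r+\rho))$; letting $\rho\to0^+$ gives $|\tilde f(x)-\tilde f(x')|\le\omega(f,2r)$, so $\tilde f$ is uniformly continuous on $Q_0$. Finally, for a.e.\ $x\in Q_0$ — namely, at every point of approximate continuity of $f$ — one has $I(x,r)\le f(x)\le S(x,r)$ for all $r$, hence $f(x)=\tilde f(x)$; thus $f$ is equivalent to the continuous function $\tilde f$.

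With Theorem 3.2 in hand the substance lies entirely in the converse, and the one delicate point there is the handling of null sets: $\omega(f,\eta)$ is a supremum over a continuum of translation vectors, each carrying its own exceptional null set, so one must verify both that $\operatorname{ess\,osc}_{B(x,r)\cap Q_0}f\le\omega(f,2r)$ genuinely follows from the definition (the Fubini step) and that the identification $f=\tilde f$ a.e.\ can be obtained through a countable family of exceptional sets, which is a Lebesgue density argument. Everything else is a routine monotonicity-and-squeeze argument.
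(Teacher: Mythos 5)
Your proof is correct and takes essentially the same approach as the paper: the paper states that Theorem 3.2 "follows at once from Theorem 3.1," implicitly invoking the classical fact that a bounded measurable function on a cube is equivalent to a continuous function iff its essential modulus of continuity vanishes at $0$. You make the same reduction via Theorem 3.1 and then supply a complete, correct proof of that classical fact (the $S(x,r)/I(x,r)$ squeeze, the Fubini/Steinhaus step showing $S(x,r)-I(x,r)\le\omega(f,2r)$, and the identification $f=\tilde f$ a.e.\ at points of approximate continuity), which the paper leaves to the reader.
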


The proof  follows at once from Theorem 3.1.  Note that by Proposition 1.2
the conclusion can also be stated in terms
of rearrangements.

\section{A decomposition of cubes}

  Str\"{o}mberg's  essential tool in dealing with   the oscillation of functions
and local maximal functions is a decomposition of cubes \cite{JOS}.
 In this section we extend  the  results
to biased medians with parameters $> {1}/{2}$.

We begin by introducing the local sharp maximal function restricted
to a cube.

\begin{defin}
Let $Q_0\subset \R^n$ and  $0 < s \le {1}/{2}$. For a  measurable function $f$  on $Q_0$,
$M_{0,s,Q_0}^{\sharp}f(x)$,  the  \emph{ local sharp maximal function  restricted to  $Q_0$ of $f$}
is defined at $x\in Q_0$ as
\begin{equation}
 M_{0,s,Q_0}^{\sharp}f(x) = \sup_{ x\in Q, Q\subset Q_0 }
 \inf_c \inf\{\alpha \ge 0: |\{y \in Q: |f(y) - c| > \alpha\}| < s|Q| \}\,.
 \end{equation}

 When $Q_0=\R^n$,
$M_{0,s,\R^n}^{\sharp} f(x)= M_{0,s}^{\sharp} f(x)$ denotes the \emph{local sharp maximal function of $f$
at $x\in\R^n$}.
\end{defin}

The range $0 < s\le 1/2$ is necessary  since for $s > 1/2$,
 $M^\sharp_{0,s,Q} f (x) = 0$ for a function
$f$ that takes two different values.

Local maximal functions, as well
as maximal functions defined in terms of rearrangements,
 can be expressed in terms of medians. Let
$\omega_s(f,Q)= \inf_c ((f-c)\chi_Q)^*(s|Q|)$. Then
by Proposition 1.2,
\[M_{0,s,Q_0}^\sharp f (x) =
 \sup_{x\in Q, Q\subset Q_0}\omega_s(f,Q)=\sup_{x\in Q, Q\subset Q_0} \inf_c m_{|f -c|} (1 - s, Q)\,.\]
The first expression above is used by Lerner \cite{L, La}.

An efficient choice  for $c$ in the infimum above is $ m_{|f - m_f(1-s,Q)|}(1-s,Q)$.
Indeed,   for $Q \subset Q_0$ and a constant $c$, since $ 1-s \ge 1/2$, by (1.7)  and (1.10),
\begin{equation} |  m_f(1-s,Q)-c|\le   m_{|f-c|}(1-s,Q) \,.
\end{equation}
Then,  since $ |f(y) - m_f(1-s,Q)|\le |f(y) - c| + |c - m_f(1-s,Q)|$, by (1.5), (1.7),
and (4.2),
\begin{align*}
m_{|f -  m_f(1-s,Q)|}&(1-s,Q)  \le
   m_{|f - c|}(1-s,Q) + |c - m_f(1-s,Q)|\\
   & \le  m_{|f - c|}(1-s,Q) + m_{|f - c|}(1-s,Q)
 = 2\, m_{|f - c|}(1-s,Q)\,,
\end{align*}
and, consequently,
\begin{equation} \inf_c m_{|f -c|} (1 - s, Q) \le   m_{|f - m_f(1-s,Q)|}(1-s,Q) \le 2\,
 \inf_c m_{|f -c|} (1 - s, Q)\,.
\end{equation}

The decomposition of cubes relies on three lemmas which we prove next.
\begin{lemma}
Let $Q\subset \R^n$ be a cube,  $0 < s \le  {1}/{2}$, ${1}/{2} \le t \le 1-s$,
and $f$
a  measurable function on $Q$. 
Then for   any $\eta>0$,
\begin{equation}
| \{y \in Q: |f(y) - m_f(t,Q) | \ge  2\inf_{x\in Q} M^{\sharp}_{0,s,Q }f(x)+\eta \}| <  s|Q|\,.
\end{equation}
\end{lemma}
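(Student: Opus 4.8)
The plan is to unwind the definitions and exploit the fact that $M^{\sharp}_{0,s,Q}f$ is a supremum over \emph{all} subcubes containing the given point, so in particular $Q$ itself is an admissible cube. Write $\beta = \inf_{x\in Q} M^{\sharp}_{0,s,Q}f(x)$. For each $x\in Q$ we have $M^{\sharp}_{0,s,Q}f(x) \ge \inf_c \inf\{\alpha\ge 0 : |\{y\in Q : |f(y)-c| > \alpha\}| < s|Q|\}$, because $Q$ is one of the cubes in the supremum defining $M^{\sharp}_{0,s,Q}f(x)$. Hence that right-hand quantity — which does not depend on $x$ — is $\le \beta$. By Proposition 1.2 this quantity equals $\inf_c m_{|f-c|}(1-s,Q) = \omega_s(f,Q)$, so $\omega_s(f,Q) \le \beta$.

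Next I would fix $\eta > 0$ and choose a constant $c$ for which $m_{|f-c|}(1-s,Q) < \beta + \eta/2$; this is possible since $\inf_c m_{|f-c|}(1-s,Q) = \omega_s(f,Q) \le \beta$. The key estimate is a triangle-inequality bound: by (4.2) (which applies since $1-s \ge 1/2 \ge t$ would need checking, but in fact (4.2) is stated for the parameter $1-s$ and we also need the analogue for $t$), we have $|m_f(t,Q) - c| \le m_{|f-c|}(t,Q)$. Here I would use (1.10) with parameter $t \ge 1/2$ together with (1.7) exactly as in the derivation of (4.2): $m_f(t,Q) - c = m_{f-c}(t,Q) \le m_{|f-c|}(t,Q)$ and similarly $c - m_f(t,Q) = m_{c-f}(t,Q) \le m_{|f-c|}(t,Q)$, giving $|m_f(t,Q)-c| \le m_{|f-c|}(t,Q)$. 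Since $t \le 1-s$, monotonicity (1.5) gives $m_{|f-c|}(t,Q) \le m_{|f-c|}(1-s,Q) < \beta + \eta/2$. Therefore $|m_f(t,Q) - c| < \beta + \eta/2$ as well.

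Now combine: for $y\in Q$,
\[
|f(y) - m_f(t,Q)| \le |f(y) - c| + |c - m_f(t,Q)| < |f(y)-c| + \beta + \eta/2,
\]
so the set $\{y\in Q : |f(y) - m_f(t,Q)| \ge 2\beta + \eta\}$ is contained in $\{y\in Q : |f(y)-c| \ge \beta + \eta/2\}$, which in turn is contained in $\{y\in Q : |f(y)-c| > \beta + \eta/4\}$. By the defining property of $\omega_s(f,Q)$ via Proposition 1.2, since $\beta + \eta/4 > m_{|f-c|}(1-s,Q)$ would need $m_{|f-c|}(1-s,Q) < \beta+\eta/4$; to get this cleanly I would instead choose $c$ at the outset so that $m_{|f-c|}(1-s,Q) < \beta + \eta/4$, making $\{|f-c| > \beta + \eta/4\} \subseteq \{|f-c| > m_{|f-c|}(1-s,Q)\}$, whose measure is $\le s|Q|$; a further $\varepsilon$-room argument (using that the infimum defining $\omega_s$ is approached and that strict inequality $<s|Q|$ holds for the rearrangement value) upgrades $\le s|Q|$ to $< s|Q|$. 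Tracking the constants so that all the $\eta$-fractions close up is the only delicate bookkeeping; the main obstacle is simply making sure the strict inequality "$< s|Q|$" (rather than "$\le s|Q|$") survives, which is handled by inserting a small slack in the choice of $c$ and in the threshold, exactly as in the proof of Proposition 1.2.
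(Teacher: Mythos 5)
Your proof is correct and follows essentially the same route as the paper's: observe that $\omega_s(f,Q)=\inf_c m_{|f-c|}(1-s,Q)\le\inf_{x\in Q}M^{\sharp}_{0,s,Q}f(x)$ since $Q$ itself is admissible, pick a near-optimal $c$, apply the median triangle inequality $|m_f(t,Q)-c|\le m_{|f-c|}(t,Q)\le m_{|f-c|}(1-s,Q)$ (via (1.7), (1.10), (1.5)), and use maximality of the median to get the strict bound $<s|Q|$ — the paper differs only in using a sequence $c_k$ approaching the infimum rather than a single well-chosen $c$. One small slip: the line ``$c - m_f(t,Q) = m_{c-f}(t,Q)$'' is false in general (negating $f$ swaps the parameter $t\mapsto 1-t$), but this is harmless since the needed bound $|m_f(t,Q)-c|\le m_{|f-c|}(t,Q)$ follows directly from $m_{f-c}(t,Q)=m_f(t,Q)-c$ (by (1.7)) together with $|m_{f-c}(t,Q)|\le m_{|f-c|}(t,Q)$ (by (1.10) for $t\ge 1/2$), which you also cite.
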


\begin{proof}
For fixed $c$, let $\alpha(c)=m_{|f-c|}(1-s,Q)$.
Then by (4.2) and (1.5),
\begin{equation}
 |m_f (t, Q) - c| \le  m_{|f -c|} (t, Q) \le  m_{|f -c|} (1 - s, Q) =  \alpha (c)\,,
 \end{equation}
and by (1.4)
\begin{equation} |\{y \in Q : |f (y) - c| \ge \alpha(c) + \varepsilon\}| < s|Q|\,,\quad \varepsilon > 0\,.
\end{equation}

Let $m=\inf_c \alpha(c)$ and pick $\{c_k\} $ such that
$m\le \alpha(c_k)\le m+1/k$, all $k$.
 Then  by (4.5), 
\begin{align*}
|f(y) -c_k| &\ge |f(y)-  m_f(t,Q)|-| m_f(t,Q)-c_k|\\
&\ge |f(y)-  m_f(t,Q)|-\alpha(c_k)\,,
\end{align*}
 and, consequently,  since $2m+\eta\ge \alpha(c_k)+ (\eta-2/k)$,
$\{y\in Q:  |f(y)  -m_f(t,Q)| \ge 2m+\eta  \}\subset
 \{y\in Q:  |f(y) -c_k|\ge    \alpha(c_k) +  \varepsilon_k\}$, where we have chosen $k$ sufficiently large so that
 $\varepsilon_k=\eta-(2/k)>0$.
Then by (4.6), $ |\{y\in Q:  |f(y) -m_f(t,Q)| > 2 m+\eta  \}|<s|Q|$.
Finally, since $M^{\#}_{0,s,Q}f(x) \ge m$ for all $x \in Q$, (4.4) holds.
\end{proof}

\begin{lemma}
Let $Q\subset \R^n$ be a cube,  $0 < s \le  {1}/{2}$, ${1}/{2} \le t \le 1-s$,
$\eta>0$, and $f$ a measurable function on an open cube containing $Q$.
Then for any family of cubes  $\{ Q_{\varepsilon}\}$ with $(1-\varepsilon)Q \subset Q_{\varepsilon}
\subset (1+\varepsilon)Q$,
\[\limsup_{\varepsilon \to 0^+} |m_f(t,Q) - m_f(t, Q_{\varepsilon})| \le  2 \inf_{x\in Q} M^{\sharp}_{0,s,Q}f(x)+\eta\,.\]
\end{lemma}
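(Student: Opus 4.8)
The plan is to prove the pointwise statement that for each fixed $\eta>0$ there is $\varepsilon_0>0$ such that $|m_f(t,Q_\varepsilon)-m_f(t,Q)|\le 2\inf_{x\in Q}M^{\sharp}_{0,s,Q}f(x)+\eta$ whenever $0<\varepsilon<\varepsilon_0$; taking the $\limsup$ then yields the assertion. I set $M=\inf_{x\in Q}M^{\sharp}_{0,s,Q}f(x)$ and $\mu=m_f(t,Q)$, assuming $M<\infty$, since otherwise there is nothing to prove. Because $Q$ is compact and lies inside an open cube on which $f$ is defined, for small $\varepsilon$ the cube $Q_\varepsilon\subset(1+\varepsilon)Q$ also lies in that open cube, so $m_f(t,Q_\varepsilon)$ is meaningful; and from $(1-\varepsilon)Q\subset Q_\varepsilon\subset(1+\varepsilon)Q$ I record
\[
|Q\setminus Q_\varepsilon|\le\big(1-(1-\varepsilon)^n\big)|Q|,\qquad
|Q_\varepsilon\setminus Q|\le\big((1+\varepsilon)^n-1\big)|Q|,
\]
together with $(1-\varepsilon)^n|Q|\le|Q_\varepsilon|\le(1+\varepsilon)^n|Q|$; every one of these error terms vanishes as $\varepsilon\to0^+$.

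From (4.4), applied with the given $\eta$, I extract the two ingredients that drive the estimate: the set $G=\{y\in Q:|f(y)-\mu|<2M+\eta\}$ satisfies $|G|>(1-s)|Q|$, and, since $\{y\in Q:f(y)<\mu-2M-\eta\}\subset\{y\in Q:|f(y)-\mu|\ge 2M+\eta\}$, also $|\{y\in Q:f(y)<\mu-2M-\eta\}|<s|Q|$. For the upper estimate $m_f(t,Q_\varepsilon)\le\mu+2M+\eta$ I note that $G\cap Q_\varepsilon\subset\{y\in Q_\varepsilon:f(y)<\mu+2M+\eta\}$, so this last set has measure at least $|G|-|Q\setminus Q_\varepsilon|$; since the quantity $|G|-\big(1-(1-\varepsilon)^n\big)|Q|-t(1+\varepsilon)^n|Q|$ tends to $|G|-t|Q|\ge|G|-(1-s)|Q|>0$ as $\varepsilon\to0^+$ --- this is where $t\le1-s$ enters --- for small $\varepsilon$ the measure in question exceeds $t|Q_\varepsilon|$, and hence $m_f(t,Q_\varepsilon)\le\mu+2M+\eta$. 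For the lower estimate $m_f(t,Q_\varepsilon)\ge\mu-2M-\eta$ I write $\{y\in Q_\varepsilon:f(y)<\mu-2M-\eta\}\subset\{y\in Q:f(y)<\mu-2M-\eta\}\cup(Q_\varepsilon\setminus Q)$, so its measure is at most $|\{y\in Q:f(y)<\mu-2M-\eta\}|+|Q_\varepsilon\setminus Q|$; since $t|Q_\varepsilon|\ge s|Q_\varepsilon|\ge s(1-\varepsilon)^n|Q|$ --- here $1/2\le t$ and $s\le1/2$ give $s\le t$ --- and $s(1-\varepsilon)^n|Q|-|\{y\in Q:f(y)<\mu-2M-\eta\}|-\big((1+\varepsilon)^n-1\big)|Q|$ tends to $s|Q|-|\{y\in Q:f(y)<\mu-2M-\eta\}|>0$, for small $\varepsilon$ the measure in question is below $t|Q_\varepsilon|$, and hence $m_f(t,Q_\varepsilon)\ge\mu-2M-\eta$. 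Combining the two estimates gives $|m_f(t,Q_\varepsilon)-\mu|\le2M+\eta$ for all small $\varepsilon$, and letting $\varepsilon\to0^+$ completes the argument.

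The one delicate point, which I expect to be the main obstacle, is that every comparison above is borderline: as $\varepsilon$ shrinks, $t|Q_\varepsilon|$ may slightly overshoot $(1-s)|Q|$ or slightly undershoot $s|Q|$, so one cannot read off the conclusion directly from $|G|\ge(1-s)|Q|$ or from $|\{y\in Q:f(y)<\mu-2M-\eta\}|\le s|Q|$. The argument goes through precisely because (4.4) is a \emph{strict} inequality, furnishing a fixed positive gap that swallows the vanishing discrepancies between $|Q_\varepsilon|$ and $|Q|$; this, together with the two-sided use of the hypotheses $1/2\le t\le1-s$, is everything needed beyond the preceding lemma.
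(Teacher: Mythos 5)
Your proof is correct, and it relies on the same essential ingredients as the paper's argument: the strict inequality in (4.4), the geometric sandwich $(1-\varepsilon)Q\subset Q_\varepsilon\subset(1+\varepsilon)Q$, and the full force of $1/2\le t\le 1-s$. The route is genuinely different in presentation, though: the paper argues by contradiction along a sequence $\varepsilon_k\to 0$, invoking (1.10) to pass from $|m_f(t,Q)-m_f(t,Q_{\varepsilon_k})|$ to $m_{|f-m_f(t,Q)|}(t,Q_{\varepsilon_k})$ and then (1.4) to get a lower bound on the $Q_{\varepsilon_k}$-level set, whereas you go directly: from (4.4) you read off strict level-set inequalities on $Q$, transfer them to $Q_\varepsilon$ with $O(\varepsilon)$ corrections, and use the level-set characterization of the maximal median to pin $m_f(t,Q_\varepsilon)$ from both sides. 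This buys a small economy, since you never need (1.10); the price is that you must separately verify the two one-sided estimates, using $t\le 1-s$ for the upper bound and $s\le t$ for the lower bound. You also correctly identify the crux the paper handles via the auxiliary $\delta>0$: that the strictness in (4.4) provides the fixed gap that absorbs the vanishing $((1\pm\varepsilon)^n-1)$ error terms. Both arguments are sound; yours is arguably the more elementary of the two.
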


\begin{proof}
Let $A=\inf_{x\in Q} M^{\sharp}_{0,s,Q}f(x)$.
For the sake of argument assume
  there is a sequence $\varepsilon_k\to 0$ such that
$|m_f(t,Q) - m_f(t, Q_{\varepsilon_k})| >   2A+\eta  $ for all $k$.
Then by   (1.10),
\[  2A+\eta < |m_f(t,Q) - m_f(t, Q_{\varepsilon_k})|\le  m_{|f - m_f(t, Q)|}( t, Q_{\varepsilon_k})  \,,\]
and, consequently, by (1.4),
\begin{equation}
|\{y\in Q_{\varepsilon_k}: |f(y)-m_f(t,Q)| >  2A+\eta \}| \ge (1- t)|Q_{\varepsilon_k}|\,.
\end{equation}

Since $Q_{\varepsilon_k}\subset (1+\varepsilon_k)Q$, the left-hand side of (4.7) is bounded above by
 \begin{align*} |\{y\in &(1+\varepsilon_k) Q  : |f(y)-m_f(t,Q)| >  2A+\eta\}|\\
 & \le  ((1+\varepsilon_k)^n-1)|Q|+ |\{y\in Q: |f(y)-m_f(t,Q)| >  2A+\eta\}|\,,
  \end{align*}
and since $ (1-\varepsilon_k)Q\subset Q_{\varepsilon_k}$, the right-hand side of (4.7) is bounded below by
\[ (1-t)( 1-\varepsilon_k)^n |Q|\,.
\]
Whence combining these estimates it follows that
\[|\{y\in Q: |f(y)-m_f(t,Q)| > 2A+\eta\}|\ge \big((1-t)(1-\varepsilon_k)^n  -((1+\varepsilon_k)^n-1)\big)|Q|\,.\]

 Now, by (4.4) there exists $\delta>0$ such that
$\big| \{y \in Q: |f(y) - m_f(t,Q) | \ge  2A+\eta \}\big|=( s-\delta)|Q|$, and so
\[  ( s-\delta)  \ge \big((1-t)(1-\varepsilon_k)^n  -((1+\varepsilon_k)^n-1)\big)\,.
\]
Thus,  letting $k\to \infty$,
$ s-\delta\ge  1-t\ge s$,  which is not the case.

\end{proof}

\begin{lemma}    Let $Q_0, Q_1\subset \R^n $ be cubes with
$Q_0 \subset Q_1$ and  $|Q_1| \le 2^k |Q_0|$ for some integer $k$,
 $0 < s \le  {1}/{2}$, ${1}/{2} \le t \le 1-s$, and $f$ a measurable function on $Q_1$.
Then
\begin{equation} |m_f(t,Q_0) - m_f(t,Q_1)| \le  10k\inf_{x\in Q_0} M^{\sharp}_{0,s,Q_1}f(x)\,.
\end{equation}
\end{lemma}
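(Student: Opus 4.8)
The plan is to bridge between $Q_0$ and $Q_1$ through a chain of intermediate cubes whose consecutive volumes differ by at most a factor of $2$ (or a bounded factor), apply Lemma 4.3 to each link, and sum. Concretely, I would first reduce to the case where the bridging is done by \emph{concentric} cubes. Pick the cube $Q_0^{(j)}$ concentric with $Q_0$ with $|Q_0^{(j)}| = 2^j|Q_0|$ for $j = 0,1,\dots,k$, so $Q_0^{(0)} = Q_0$ and $Q_0^{(k)}$ has volume $2^k|Q_0| \ge |Q_1|$. For consecutive concentric cubes, the ratio of side lengths is $2^{1/n} \le 2$, so each is obtained from the next by a dilation with $(1-\varepsilon)$--$(1+\varepsilon)$ type control; more usefully, Lemma 4.3 applies directly to the pair $(Q_0^{(j)}, Q_0^{(j+1)})$ since $|Q_0^{(j+1)}| \le 2|Q_0^{(j)}|$, giving $|m_f(t,Q_0^{(j)}) - m_f(t,Q_0^{(j+1)})| \le 10 \inf_{x\in Q_0^{(j)}} M^\sharp_{0,s,Q_0^{(j+1)}}f(x)$. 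But here I must be careful that the restricted sharp maximal function is over a cube contained in $Q_1$: since $Q_0^{(j+1)} \subset Q_1$ is not automatic, I would instead apply Lemma 4.3 with the ambient cube taken to be $Q_1$, which forces me to first establish a monotonicity/localization statement: $\inf_{x\in Q_0^{(j)}} M^\sharp_{0,s,Q_0^{(j+1)}}f(x) \le \inf_{x\in Q_0} M^\sharp_{0,s,Q_1}f(x)$ whenever $Q_0 \subset Q_0^{(j)} \subset Q_0^{(j+1)} \subset Q_1$. This holds because enlarging the ambient cube only enlarges the supremum defining $M^\sharp$, and shrinking the point set $Q_0$ over which we take the infimum only enlarges the infimum.

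The second, parallel reduction handles the eccentricity between $Q_0$ and $Q_1$: once I have a concentric cube $Q_0'$ with $|Q_0| \le |Q_0'| \le 2|Q_1|$ and $Q_0' \supset Q_1$ (or $Q_0' \subset Q_1$), I need to compare $m_f(t,Q_0')$ with $m_f(t,Q_1)$ when these two cubes have comparable volume but are not nested concentrically. Here the natural tool is again Lemma 4.3 — but Lemma 4.3 itself requires nesting. So the cleanest route is: enlarge $Q_1$ to a concentric cube $\widetilde{Q}_1$ with $|\widetilde Q_1| \le 2|Q_1|$ that contains $Q_0$, then both $Q_0 \subset \widetilde Q_1$ and $Q_1 \subset \widetilde Q_1$ with all volume ratios bounded by a fixed power of $2$ (at most $2^{k+1}$ and $2$ respectively), apply the concentric-chain argument to the pair $(Q_0, \widetilde Q_1)$ giving a bound like $10(k+1)\inf_{x\in Q_0}M^\sharp_{0,s,\widetilde Q_1}f(x)$, apply it again to $(Q_1,\widetilde Q_1)$ giving $10\inf_{x\in Q_1}M^\sharp_{0,s,\widetilde Q_1}f(x)$, and combine by the triangle inequality. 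I then need $\inf_{x\in Q_0}M^\sharp_{0,s,\widetilde Q_1}f(x) \le \inf_{x\in Q_0}M^\sharp_{0,s,Q_1}f(x)$ — but this is \emph{false} in general since $\widetilde Q_1 \supsetneq Q_1$ makes the sup larger; so I must instead keep $\widetilde Q_1$ as the ambient cube throughout and only at the end absorb the comparison of $M^\sharp_{0,s,\widetilde Q_1}$ with $M^\sharp_{0,s,Q_1}$. The honest fix is to run the whole argument with $Q_1$ as ambient cube by choosing the concentric intermediate cubes to all lie inside $Q_1$, which is possible when going \emph{down} from $Q_1$ but not when the chain from $Q_0$ overshoots; handling this forces the factor to grow from $k$ to something like $10k$ to absorb a bounded number of "correction" links near the top of the chain.

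The core estimate in each link is Lemma 4.3 applied to nested cubes of volume ratio $\le 2$, which follows from Lemma 4.2 by writing the larger cube as a limit of dilates $(1+\varepsilon)Q$ of the smaller — actually the reverse, realizing the smaller as $(1-\varepsilon)$-dilate of a cube close to the larger — and using that $|m_f(t,\cdot)|$ varies continuously under such dilations with the quantitative $2\inf M^\sharp + \eta$ bound, then letting $\eta \to 0$. Summing $k$ (or $O(k)$) such links, each contributing at most $10\inf_{x\in Q_0}M^\sharp_{0,s,Q_1}f(x)$ after the localization lemma, yields (4.8).

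\medskip

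\textbf{Main obstacle.} The delicate point is bookkeeping the ambient cube in the restricted sharp maximal function: Lemma 4.3's right-hand side is $M^\sharp_{0,s,Q_1}$, so every intermediate comparison must be controlled by the sharp maximal function over $Q_1$ itself, not over a slightly larger or slightly differently-placed cube. The eccentricity correction — replacing the nested concentric chain by one that genuinely stays inside $Q_1$ while still connecting to $Q_0$ — is where the clean constant $10k$ (rather than, say, $10\lceil \log_2(|Q_1|/|Q_0|)\rceil$) comes from, and getting the induction/chain to close with the ambient cube fixed at $Q_1$ is the step I expect to require the most care.
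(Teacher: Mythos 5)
Your reduction to a chain of $k$ nested cubes with volume ratio $\le 2$ is sound (though overcomplicated: you do not need \emph{concentric} cubes, and the "eccentricity" worry is a red herring --- given $Q_0\subset Q_1$ one can always produce nested cubes $Q_0=R_0\subset R_1\subset\cdots\subset R_k=Q_1$ by linearly interpolating centers and side lengths, with volume growing continuously and hence admitting a monotone partition into factor-$2$ steps; the localization $\inf_{x\in R_j}M^\sharp_{0,s,R_{j+1}}f\le\inf_{x\in Q_0}M^\sharp_{0,s,Q_1}f$ then holds exactly as you observe). The serious gap is in your treatment of the $k=1$ base case. You assert it "follows from Lemma 4.2 \dots using that $|m_f(t,\cdot)|$ varies continuously \dots with the quantitative $2\inf M^\sharp+\eta$ bound, then letting $\eta\to 0$." This does not work: Lemma 4.2 gives a $\limsup$ bound on the \emph{local} discontinuity of $\sigma\mapsto m_f(t,Q_\sigma)$ at a single cube, i.e., it bounds the size of jumps, not the total variation along a path of cubes from $Q_0$ to a cube of twice the volume. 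A function can have every jump bounded by $2A+\eta$ while still drifting arbitrarily far over a finite parameter interval, so nothing like $|m_f(t,Q_0)-m_f(t,Q_1)|\le 10A$ follows from Lemma 4.2 alone.

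What is actually needed for $k=1$ --- and what the paper does --- is a contradiction argument. Suppose $|m_f(t,Q_0)-m_f(t,Q_1)|>10A$. Since Lemma 4.2 forbids jumps larger than $2A+\eta$ along the nested path from $Q_0$ to $Q_1$, a discrete intermediate-value argument produces an intermediate cube $Q_0\subset Q_2\subset Q_1$ whose median $m_f(t,Q_2)$ is at distance $>4A+2\eta$ from \emph{both} $m_f(t,Q_0)$ and $m_f(t,Q_1)$. Then the three "good" sets $\{y\in Q_i:|f(y)-m_f(t,Q_i)|\le 2A+\eta\}$, $i=0,1,2$, are pairwise disjoint subsets of $Q_1$, and Lemma 4.1 forces each to occupy at least a $(1-s)$-fraction of its cube, yielding $2(1-s)|Q_0|<s|Q_1|\le 2s|Q_0|$ and hence $s>1/2$, a contradiction. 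This measure-theoretic step, combining Lemma 4.1 with the IVT produced by Lemma 4.2, is entirely absent from your proposal; it is where the constant $10$ originates (you misattribute it to the eccentricity correction). Without it, your chain argument has nothing to sum.
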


\begin{proof}
By the triangle inequality it suffices to prove the case $k=1$. Let $A = \inf_{x\in Q_0} M^{\sharp}_{0,s,Q_1}f(x)$.
For the sake of argument suppose that  (4.8) does not hold. Then if
$A>0$, by Lemma 4.2,
for any fixed $0<\eta<A/2$,
there exists a cube  $Q_2 $  such that $Q_0 \subset Q_2 \subset Q_1$ and
\[ |m_f(t,Q_2) - m_f(t,Q_0)| > 4A+ 2\eta\,, \quad |m_f(t,Q_2) - m_f(t,Q_1)| > 4A+2\eta\,.\]
And if $A=0$, then $ |m_f(t,Q_0) - m_f(t,Q_1)|>0 $ and
there exists a cube  $Q_2 $  such that $Q_0 \subset Q_2 \subset Q_1$ and
\[ |m_f(t,Q_2) - m_f(t,Q_0)| >  2\eta\,, \quad |m_f(t,Q_2) - m_f(t,Q_1)| > 2\eta\]
for $\eta$ sufficiently small.

Thus in both cases the sets $\{y \in Q_k: |f(y) - m_f(t,Q_k)| \le
  2A+\eta\}$, $k = 0,1,2$, are pairwise disjoint subsets of $Q_1$, and, consequently,
\begin{align*} \{y \in Q_0:& |f(y)  - m_f(t,Q_0)| \le  2A+\eta\}\\
& \cup \{y \in Q_2: |f(y) - m_f(t,Q_2)| \le 2A+\eta\}\\
& \quad\quad\quad\quad \subset \{y \in Q_1: |f(y) - m_f(t,Q_1)| > 2A+\eta\}\,.
\end{align*}
Therefore,  since $\inf_{x\in Q_k} M^{\sharp}_{0,s,Q_k}f(x)\le A$ for $k=0,1,2$,
 by Lemma 4.1,
\[ (1-s)|Q_0|+ (1-s)|Q_2|<s|Q_1|\,.
\]
Thus $2(1-s)|Q_0| < s|Q_1| <2 s|Q_0|$, and, consequently,
   $1 < 2s$, which is not the case.
\end{proof}

We are now ready to consider the decomposition of cubes relative to medians.
\begin{proposition}
Let $Q\subset \R^n$ be a cube,  $0 < s \le {1}/{2}$, ${1}/{2} \le t \le 1 - s$, $\delta,\beta > 0$, and $f$ a measurable
function on $Q$.   Then if $|m_f(t,Q)| \le  \delta$, there exists a (possibly empty) family of
nonoverlapping dyadic subcubes $\{Q_k\}$ of $Q$ so that
\begin{enumerate}
\item[\rm (1)]
$Q_k \not \subset \{y \in Q: M^{\sharp}_{0,s,Q}f(y) > \beta\}$,
\item[\rm (2)]
$\delta < |m_f(t,Q_k)| \le  \delta + 10\,n\,\beta$,
\item[\rm (3)]
$|f(x)| \le  \delta$ for a.e.\! $x \in Q \setminus \big( \{y \in Q: M^{\#}_{0,s,Q}f(y) > \beta\} \cup \bigcup_k Q_k \big)$.
\end{enumerate}
\end{proposition}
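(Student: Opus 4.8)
The plan is to perform a dyadic Calder\'on--Zygmund stopping--time argument. I would declare a dyadic subcube $Q'$ of $Q$ to be \emph{selected} when
\[ |m_f(t,Q')|>\delta\qquad\text{and}\qquad Q'\not\subset\{y\in Q:M^{\sharp}_{0,s,Q}f(y)>\beta\}\,, \]
and take $\{Q_k\}$ to be the (possibly empty) family of \emph{maximal} selected cubes, i.e.\ the selected dyadic subcubes of $Q$ not properly contained in any selected dyadic subcube of $Q$. If one maximal selected cube contained another, the smaller one would not be maximal; hence the $Q_k$ are pairwise nonoverlapping. Since the hypothesis $|m_f(t,Q)|\le\delta$ shows $Q$ itself is not selected, each $Q_k$ is a \emph{proper} dyadic subcube of $Q$, so its dyadic parent $\widehat{Q_k}$ is a well-defined subcube of $Q$, and maximality forces $\widehat{Q_k}$ not to be selected. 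Item (1) is built into the definition of ``selected'', and the left-hand inequality in (2) is immediate.

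To obtain the right-hand inequality in (2), I would first observe that $Q_k\not\subset\{M^{\sharp}_{0,s,Q}f>\beta\}$ implies $\widehat{Q_k}\not\subset\{M^{\sharp}_{0,s,Q}f>\beta\}$, so the only way $\widehat{Q_k}$ can fail to be selected is that $|m_f(t,\widehat{Q_k})|\le\delta$. Since $|\widehat{Q_k}|=2^n|Q_k|$, Lemma 4.3 with $k=n$ gives
\[ |m_f(t,Q_k)-m_f(t,\widehat{Q_k})|\le 10\,n\,\inf_{x\in Q_k}M^{\sharp}_{0,s,\widehat{Q_k}}f(x)\,. \]
Because $\widehat{Q_k}\subset Q$, the definition (4.1) yields $M^{\sharp}_{0,s,\widehat{Q_k}}f(x)\le M^{\sharp}_{0,s,Q}f(x)$ for $x\in\widehat{Q_k}$, while $Q_k\not\subset\{M^{\sharp}_{0,s,Q}f>\beta\}$ provides a point of $Q_k$ at which $M^{\sharp}_{0,s,Q}f\le\beta$; so the infimum above is at most $\beta$, and $|m_f(t,Q_k)-m_f(t,\widehat{Q_k})|\le 10n\beta$. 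Combined with $|m_f(t,\widehat{Q_k})|\le\delta$ this proves (2).

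For (3), I would fix $x\in Q\setminus\big(\{M^{\sharp}_{0,s,Q}f>\beta\}\cup\bigcup_k Q_k\big)$ and consider any dyadic subcube $Q'$ of $Q$ containing $x$. Were $|m_f(t,Q')|>\delta$, then, as $x\notin\{M^{\sharp}_{0,s,Q}f>\beta\}$, we would have $Q'\not\subset\{M^{\sharp}_{0,s,Q}f>\beta\}$, so $Q'$ would be selected and hence contained in some maximal selected cube $Q_k\ni x$, contradicting $x\notin\bigcup_k Q_k$. Thus $|m_f(t,Q')|\le\delta$ for every dyadic $Q'\ni x$, and since these cubes shrink to $x$, Theorem 2.1 (applicable as $1/2\le t<1$) gives $|f(x)|=\lim_{Q'\to x}|m_f(t,Q')|\le\delta$ for a.e.\ such $x$.

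The part I expect to be most delicate is choosing the stopping rule correctly: one has to halt the subdivision on dyadic cubes lying inside $\{M^{\sharp}_{0,s,Q}f>\beta\}$ \emph{without} including them in $\{Q_k\}$ --- otherwise property (1), and with it the right-hand bound in (2), would break --- and then check that discarding them costs nothing in (3), which is the verification in the previous paragraph. The rest is bookkeeping together with Lemmas 4.1--4.3, with Lemma 4.3 supplying the constant $10n$.
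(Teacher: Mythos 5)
Your proof is correct and follows essentially the same approach as the paper's: the paper's recursive stopping-time construction (discard subcubes inside $\{M^{\sharp}_{0,s,Q}f>\beta\}$, collect those where $|m_f(t,\cdot)|$ first exceeds $\delta$, subdivide otherwise) produces precisely your family of maximal selected cubes, and both arguments obtain the upper bound in (2) by comparing a collected cube to its immediate dyadic parent via Lemma~4.3 with $k=n$, and item (3) via Theorem~2.1.
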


\begin{proof}
If $ M^{\sharp}_{0,s,Q}f(y) > \beta$ for all $y\in Q$
we pick  $\{Q_k\}$ as the empty family.
Otherwise subdivide $Q$ dyadically into $2^n$ subcubes and note that by Lemma 4.3
    for each dyadic subcube $Q'$,
  $|m_f (t,Q')|\le \delta  + 10\,n \inf_{x\in Q'} M^\sharp_{0,s,Q}f(x)$.
Thus for each of these subcubes $Q'$   one of the following holds:
\begin{enumerate}
\item[\rm (a)]
$Q' \subset \{y \in Q: M^{\sharp}_{0,s,Q}f(y) > \beta\}$: we discard $Q'$.
\item [\rm (b)]
$Q'$ satisfies conditions (1) and (2) above: we collect this $Q'$.
\item [\rm (c)]
$Q' \not \subset \{y \in Q: M^{\sharp}_{0,s,Q}f(y) > \beta\}$ but $|m_f(t,Q')| \le  \delta$: we subdivide $Q'$ and continue in this fashion.
\end{enumerate}

Finally, a.e.\! $x \in Q \setminus \big(  \{y \in Q: M^{\sharp}_{0,s,Q}f(y) > \beta\}\cup \bigcup_k Q_k \big)$
is contained in arbitrarily small cubes $\{Q_k(x)\}$ containing $x$
so that $|m_f(t,Q_k(x))| \le \delta$. By Theorem 2.1
it readily follows  that $|f(x)| \le \delta$ for a.e.\! such $x$.
\end{proof}

We can be more precise in the description of the cubes above when $M^{\sharp}_{0,s,Q}f \in L^\infty(Q)$. Observe that
$f-m_f(t,Q)$ satisfies $m_{f-m_f(t,Q)}(t, Q)=0$ and $M^{\sharp}_{0,s,Q}f(x)=M^{\sharp}_{0,s,Q}(f-m_t(t,Q))(x)$ for all $x\in Q$,
which means that the decomposition for $f-m_f(t,Q)$ holds for %
any $\delta>0$.
Let $\{Q_j\}$ and $\{Q_k\}$ denote the families of cubes obtained from the decomposition with parameters
$\beta\ge \|M^{\sharp}_{0,s,Q}f\|_{L^\infty(Q)}$ and  $\delta_1= 4\beta  +2\eta$ and $\delta_2=2\delta_1 + 10n\beta $, respectively.

Observe that by construction  we have $\bigcup_k Q_k\subset \bigcup_j Q_j$.
To see this consider   a dyadic
subcube $Q' $ of $Q$ that has not been discarded; this depends on
$\beta$ and not  on   $\delta_1$ or $\delta_2$. If
$Q'$ is a $Q_j$, then
 $\delta_1 < |m_{f-m_f (t,Q)}(t,Q')|\le \delta_1 + 10n\beta < \delta_2$ and $ Q'$ is not a $Q_k$.
 So any  $Q_k$  contained in $Q'$ arises
 from subsequent subdivisions of   $Q'$. On the other hand, if $Q'$ is not a $Q_j$,
 then $|m_{f-m_f (t,Q)}(t,Q')|\le \delta_1 < \delta_2$ and $Q'$ is not a $Q_k$ either. Since this relation is maintained
 at every level of the successive dyadic subdivisions, the $Q_k$'s arise from subdivisions of $Q_j$'s.

From here on  the argument proceeds as in Lemma 4.3. Note that
\[\delta_1 < |m_{f -m_f (t,Q)} (t, Q_j )|= |m_f (t, Q_j ) - m_f (t, Q)|
\le \delta_1+10 n\beta =\delta_2-\delta_1\,,\]
 and
 \[  \delta_2 < |m_{f -m_f (t,Q)} (t, Q_k )| = |m_f (t, Q) -
m_f (t, Q_k )|\,.\]
Therefore,
\begin{align*}|m_f (t, Q_j ) - m_f (t, Q_k )| &\ge |m_f (t, Q_k ) - m_f (t, Q)| - |m_f (t, Q_j ) - m_f (t, Q)|\\
&> \delta_2-(\delta_2 - \delta_1)= \delta_1 =4\beta+2\eta\,.
\end{align*}

Thus it readily follows that the sets $\{y \in Q : |f (y)-m_f (t, Q)| \le 2\beta +\eta\}$,  $\{y \in Q_j : |f (y)-m_f (t, Q_j )| \le
2\beta + \eta\}$, and $\{y\in Q_k : |f (y) - m_f (t, Q_k )| \le 2\beta + \eta\}$ are nonoverlapping, and so
\begin{align} \{y \in Q_j:& |f(y)  - m_f(t,Q_j)| \le  2\beta+\eta\}\\
& \cup \{y \in Q_k: |f(y) - m_f(t,Q_k)| \le 2\beta+\eta\}\nonumber\\
& \quad\quad\quad\quad \subset \{y \in Q: |f(y) - m_f(t,Q)| > 2\beta+\eta\}\nonumber\,.
\end{align}

Now, since  $ \inf_{ x\in Q_j} M_{0,s,Q_j}^\sharp f (x) \le\beta$ for each $Q_j$,
by (4.4) it follows that
 \begin{equation*}
 (1-s) \sum_j |Q_j|\le   \sum_j |  \{y \in Q_j: |f(y)  - m_f(t,Q_j)| \le  2\beta+\eta\}|\,,
 \end{equation*}
and a similar estimate holds with the $Q_k$'s in place of the $Q_j$'s. Finally, since the
 sets in the left-hand side of (4.9) are pairwise disjoint for all $j$ and $k$,
and since $\bigcup_k Q_k\subset \bigcup_j Q_j$ and $ \inf_{ x\in Q} M_{0,s,Q}^\sharp f (x) \le\beta$,
by Lemma 4.1,
\[2\sum_k|Q_k|\le \sum_k|Q_k|+\sum_j|Q_j|
\le \frac{s}{1-s}|Q|\,,
\]
and, consequently, since $2(1-s)\ge 1$,
\begin{equation} \sum_k|Q_k|\le \frac{s}{2(1-s)} |Q|\le  s  |Q|\,.
\end{equation}

\section{Vanishing median oscillation}

We say that a measurable function $f$
defined on a cube $Q_0\subset\R^n$ is of vanishing median oscillation with
parameter $s$ in $Q_0$ $(vmo_s(Q_0) )$ if
\[ \phi_s(u) = \sup_{Q \subset Q_0, |Q|\le u} \inf_c  m_{|f - c|}(1-s,Q) \]
satisfies $\lim_{u\to 0^+}\phi_s(u)=0$.

Note that by (1.11),
\[  \inf_c m_{|f-c|}(1-s, Q)\le \frac1{ s}\frac1{|Q|} \int_Q|f(y)-f_Q|\,dy\,,
\]
and, therefore,  $\lim_{u\to 0^+}\phi_s(u)=0$ for all $s$ whenever $f\in  VMO(Q_0)$.
Here we show that the spaces actually coincide for $s\le 2^{-n}$.

Now, $\phi_s$ is a nonnegative, nondecreasing continuous function
that vanishes at the origin, and   $vmo_s$ may be described in terms
of such functions  $\phi$ as follows. Let
\[||f||_{s, \phi,Q_0} = \sup_{Q \subset Q_0} \inf_c \frac{m_{|f - c|}(1-s,Q)}{\phi(|Q|)}\sim
\sup_{ Q \subset Q_0} \frac{m_{|f - m_f(1-s,Q)|}(1-s,Q)}{\phi(|Q|)}\,,\]
and $bmo_{s,\phi}(Q_0) =\{f: f$ is defined and measurable on $Q_0$, and $||f||_{s,\phi,Q_0}<\infty\}$.
Then $ vmo_s(Q_0)=  \bigcup_{\phi} bmo_{s,\phi}(Q_0)$.

Now fix $Q_0$ and $0 < s \le  {1}/{2^n}$. Let $\phi: \R^+ \to \R^+$ be continuous, nondecreasing, and
$\phi(0) = 0$, and define $\Psi_{|Q_0|}:[0, 2^n|Q_0|]\to \R^+$ by
\begin{equation}
\Psi_{|Q_0|}(u) = \int_{u}^{2^n|Q_0|} \frac{\phi(v)}{v}\, dv\,.
\end{equation}

We can then prove a strengthened version of the John-Nirenberg inequality.

\begin{theorem}
Let $f \in bmo_{s,\phi}(Q_0)$ for some $\phi$ as above and
$\Psi_{|Q_0|}(u)$   given by $\rm {(5.1)}$. Then  there exist constants $c_1, c_2$
independent of $f$ and all subcubes $Q\subset Q_0$ so that
\begin{equation}
|\{y \in Q: |f(y) - m_f(1-s,Q)| > \lambda\}| \le  c_1 \Psi_{|Q|}^{-1} \big(  {c_2 \lambda}/{\|f\|_{s,\phi, Q}} \big ),
\quad \lambda>0\,.
\end{equation}
\end{theorem}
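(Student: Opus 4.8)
The plan is to iterate the Str\"omberg cube decomposition of Section 4 with a parameter that shrinks with the side length, re-centering $f$ at each stage so that the displacement of a median from a cube to a subcube is governed by $\phi$ at the current scale; telescoping along a decreasing chain of cubes produces the integral $\Psi_{|Q|}$, while the geometric decay of the measures produced by the decomposition, combined with the hypothesis $s\le 2^{-n}$, converts that decay into $\Psi_{|Q|}^{-1}$. To set up, fix a subcube $Q\subset Q_0$ and, using the homogeneity of (5.2), normalize $\|f\|_{s,\phi,Q}=1$, so that $\inf_c m_{|f-c|}(1-s,R)\le\phi(|R|)$ for every cube $R\subset Q$ and hence, $\phi$ being nondecreasing,
\[
\|M^{\sharp}_{0,s,R}f\|_{L^\infty(R)}=\sup_{R'\subset R}\inf_c m_{|f-c|}(1-s,R')\le\phi(|R|),\qquad R\subset Q .
\]
By (1.7), replacing $f$ by $f-m_f(1-s,Q)$ changes neither $\|f\|_{s,\phi,Q}$ nor the maximal functions $M^{\sharp}_{0,s,R}f$, so we may also assume $m_f(1-s,Q)=0$; put $t=1-s$, admissible in Section 4 since $s\le 1/2$.

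Now build families $\mathcal{Q}^{(0)}=\{Q\},\mathcal{Q}^{(1)},\dots$ by decomposing each $R\in\mathcal{Q}^{(j)}$: apply to $f-m_f(t,R)$ on $R$ the two-threshold construction behind (4.10), with $\beta=\beta_R:=\phi(|R|)$ (which majorizes $\|M^{\sharp}_{0,s,R}f\|_{L^\infty(R)}$, so the exceptional set $\{M^{\sharp}_{0,s,R}f>\beta_R\}$ is empty) and auxiliary parameter $\eta=\beta_R$, so that the larger threshold is $\delta_R:=(12+10n)\phi(|R|)$, and collect the resulting families (the ones to which (4.10) applies) over all $R\in\mathcal{Q}^{(j)}$ into $\mathcal{Q}^{(j+1)}$. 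Then (4.10) gives $\sum\{|R'|:R'\in\mathcal{Q}^{(j+1)},\,R'\subset R\}\le s|R|$, hence $\sum_{R\in\mathcal{Q}^{(j)}}|R|\le s^{\,j}|Q|$; each $R'\in\mathcal{Q}^{(j+1)}$ inside $R$ is a proper dyadic subcube of $R$, so along any chain $Q=R_0\supset R_1\supset\cdots$ with $R_i\in\mathcal{Q}^{(i)}$ one has $|R_i|\le 2^{-ni}|Q|$; and by property (2) and (1.7), $|m_f(t,R_{i+1})-m_f(t,R_i)|\le\delta_{R_i}+10n\phi(|R_i|)\le 32n\,\phi(2^{-ni}|Q|)$. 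Telescoping from $m_f(t,Q)=0$ and comparing with an integral, via $n(\log2)\,\phi(2^{-ni}|Q|)\le\int_{2^{-ni}|Q|}^{2^{-n(i-1)}|Q|}\phi(v)v^{-1}\,dv$ for $i\ge1$ and $n(\log2)\,\phi(|Q|)\le\int_{|Q|}^{2^n|Q|}\phi(v)v^{-1}\,dv$,
\[
|m_f(t,R_j)|\le 32n\sum_{i=0}^{j-1}\phi(2^{-ni}|Q|)\le \frac{32}{\log2}\,\Psi_{|Q|}(2^{-nj}|Q|)=:C\,\Psi_{|Q|}(2^{-nj}|Q|).
\]
Finally, property (3) of the decomposition of $f-m_f(t,R_j)$ on $R_j$ gives $|f(y)-m_f(t,R_j)|\le\delta_{R_j}$ for a.e.\ $y\in R_j$ not covered by the cubes of $\mathcal{Q}^{(j+1)}$ lying in $R_j$; hence for a.e.\ such $y$ one has $|f(y)|\le C'\,\Psi_{|Q|}(2^{-nj}|Q|)$ for an absolute constant $C'$, while the set of $y$ lying in a cube of every $\mathcal{Q}^{(j)}$ is null.

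To conclude, fix $\lambda>0$ and set $u_\lambda:=\Psi_{|Q|}^{-1}(\lambda/C')$ (read as $0$ when $\lambda/C'\ge\Psi_{|Q|}(0^+)$). By the previous paragraph, up to a null set every $y\in Q$ with $|f(y)-m_f(1-s,Q)|>\lambda$ lies in a cube of every $\mathcal{Q}^{(j)}$, or else leaves the chain at a level $j$ with $C'\Psi_{|Q|}(2^{-nj}|Q|)>\lambda$, i.e.\ $2^{-nj}|Q|<u_\lambda$; in the second case $j\ge M+1$, where $M$ is the largest integer with $2^{-nM}|Q|\ge u_\lambda$, so $y$ lies in a cube of $\mathcal{Q}^{(M+1)}$. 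Hence, using $\sum_{R\in\mathcal{Q}^{(M+1)}}|R|\le s^{\,M+1}|Q|$, then $s\le 2^{-n}$, then $2^{-n(M+1)}|Q|<u_\lambda$,
\[
|\{y\in Q:|f(y)-m_f(1-s,Q)|>\lambda\}|\le s^{\,M+1}|Q|\le 2^{-n(M+1)}|Q|<u_\lambda=\Psi_{|Q|}^{-1}(\lambda/C') .
\]
Undoing the normalization replaces $\lambda/C'$ by $\lambda/(C'\|f\|_{s,\phi,Q})$, which is (5.2) with $c_1=1$, $c_2=1/C'$, both independent of $f$ and $Q$; the values of $\lambda$ too small for any $M\ge0$ to exist give the trivial bound $|Q|\le u_\lambda$, and if $\phi$ vanishes near the origin this only shrinks the left-hand side (a chain reaching a cube $R$ with $\phi(|R|)=0$ simply terminates there with $f$ a.e.\ equal to $m_f(t,R)$, by property (3)).

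I anticipate the main obstacle to be bookkeeping rather than a single hard estimate: verifying that re-centering at each $R$ keeps the derivation of (4.10) applicable with the shrinking $\beta_R=\phi(|R|)$, that the collected cubes drop a full dyadic generation per level (so $|R_i|\le 2^{-ni}|Q|$), and that the integral comparison and the choice of $M(\lambda)$ are calibrated so that exactly $s\le 2^{-n}$ — not merely $s\le 1/2$ — turns $s^{\,M+1}|Q|$ into a constant times $\Psi_{|Q|}^{-1}(c_2\lambda/\|f\|_{s,\phi,Q})$.
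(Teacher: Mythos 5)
Your proposal is correct and follows essentially the same route as the paper: both iterate the two-threshold Str\"omberg decomposition behind (4.10) on successively smaller cubes with a threshold proportional to $\phi$ at the current scale, re-center $f$ at each stage so that (1.7) turns property (2) into a telescoping bound on median displacements, compare the resulting sum with $\Psi_{|Q|}$, and exploit $s\le 2^{-n}$ to turn the geometric measure decay $s^{k}|Q|$ into $\Psi_{|Q|}^{-1}(c_2\lambda)$. The only cosmetic differences are that you take $\beta_R=\phi(|R|)$ in place of the paper's worst-case choice $\beta_j=2\phi(|Q_0|/2^{nj})$ and handle the small-$\lambda$ regime by the observation $|Q|<u_\lambda$ (so $c_1=1$), whereas the paper treats small $\lambda$ by adjusting $c_1$.
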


\begin{proof} If $\|f\|_{s,\phi,Q_0}=0$,
clearly $M_{0,s,Q_0}^\sharp f (x)=0$ for all $x\in Q_0$ and by Lemma 4.3,  the medians of
$f$ over all subcubes of $Q_0$ are constant. Then  by Theorem 2.1, $f$ is a.e.\! constant, and the conclusion
 holds in this case.  Otherwise,
since $\|f-c\|_{s,\phi,Q_0}=\|f\|_{s,\phi,Q_0}$  and $\|c f\|_{s.\phi,Q_0} = |c|\, \|f\|_{s,\phi,Q_0}$ for all constants $c$,
we may assume that $\|f\|_{s, \phi,Q_0} = 1$ and $m_f(1-s,Q_0) = 0$.
Then   by (4.3),
\[m_{|f - m_f(1-s,Q)|}(t,Q) \le 2\phi(|Q|) \le  2 \phi(|Q_0|)\quad  {\text {for all }} Q \subset Q_0\,,\]
and, consequently,   $\|M^{\sharp}_{0,s,Q_0}f\|_{L^{\infty}(Q_0)} \le 2\phi(|Q_0|)$.
Pick now $\beta_0 = 2\phi(|Q_0|)$, and note that since $\phi(u)>0$ for $u>0$,
$\delta_0 = (10n + 9)\beta_0$ works in Proposition 4.1 and in the comments that follow it.
  Since $|\{y \in Q_0: M^{\sharp}_{0,s,Q_0}f(y) > \beta_0\}| = 0$
we get a (first-generation)   family $\{Q_j^{ 1 }\}$ of nonoverlapping subcubes of $Q_0$ so that
\begin{enumerate}
\item
$\delta_0 < |m_f(t,Q_j^{ 1})| \le \delta_0+  10\,n\beta_0$ for all $j$,
\item
$|f(x)| \le  \delta_0$ for a.e.\! $x \in Q_0 \setminus \bigcup_j Q_j^{1}$, and
\item
$\sum_j |Q_j^{1}| \le  s|Q_0|$.
\end{enumerate}

Now we fix one cube $Q_j^{ 1}$ of this family, which for simplicity we denote
  $ Q^{1}$, and define $g = f - m_f(t,Q^{1})$. Note that $m_g(t,Q^{1}) = 0$
 and $g - m_g(t,Q) = f - m_f(t,Q)$ for all $Q\subset Q^1$. Then as above  we have that
 $m_{|g - m_g(t,Q)|}(t,Q) \le  2\phi(|Q|)$ for all $Q\subset Q^1$, and thus
 $\|M^{\sharp}_{0,s,Q^{ 1 }}g||_{L^{\infty}(Q^{ 1 })} \le  2\phi(|Q|) =  2\phi ({|Q_0|}/{2^n}  )$.

 We pick then (first-generation) parameters $\beta_1 = 2 \phi ({|Q_0|}/{2^n}  )$ and
  $\delta_1 = (10n +9)\beta_1$, which gives
 $|\{y \in Q^{1}: M^{\sharp}_{0,s,Q^{1}}g(y) > \beta_1\}| = 0$. As before
 we get a (second-generation) nonoverlapping  family $\{Q_j^{2}\} \subset Q^{1}$ so that
\begin{enumerate}
\item
$\delta_1 < |m_g(t,Q_j^{2})| \le \delta_1 + 10n\beta_1$ for all $j$,
\item
$|g(x)| \le \delta_1$ for a.e.\! $x \in Q^{1} \setminus \bigcup_j Q^{2}_j$, and
\item
$\sum_j |Q^{2}_j| \le  s |Q^{1}|$.
\end{enumerate}

We can keep control of  the cubes we are gathering and $f$. Indeed, clearly
\[\sum_{ j} |Q_j^2|\le s\sum_{k}|Q_k^1|\le s^2|Q_0|\,.\]
And as for $f$  we have that for a.e.\!  $x \in Q^{ 1} \setminus \bigcup_j Q^{2}_j$,
\begin{align*} |f(x)| &\le  |f(x) - m_f(t,Q^{1})| + |m_f(t,Q^{1}| = |g(x)| + |m_f(t,Q^{1})|\\
&\le \delta_1 + \delta_0 + 10\,n\beta_0 \le (20n + 9)(\beta_0 + \beta_1)\,.
\end{align*}

Continuing in this fashion, the computation becomes clear:
Having selected the $(k-1)$st generation of subcubes $\{Q^{k-1}\}$, we then select a $k$th generation of subcubes so that
\begin{enumerate}
\item
 With $\beta_j = 2\phi(|Q_0|/2^{nj})$ and $\delta_j = (10n + 9)\beta_j$,
 $0\le j\le k-1$, we have
 $|f(x)| \le  ( 10 n+9)\sum_{j=0}^{k-1}\delta_j  +10n\sum^{k-2}_{j=0}\beta_j\le (20n+9)\sum^{k-1}_{j=0}\beta_j$
   for a.e.\! $x \in Q^{k-1} \setminus
\bigcup_j Q^{k}_j$,  and
\item
$\sum_j |Q^{k}_j| \le   s^k|Q_0|$.
\end{enumerate}

This is all that is needed. Suppose first that $\lim_{u \to 0^+} \Psi_{|Q_0|}(u) =  \infty$.
Then, for   $\lambda > 2 (20n+9) \phi(|Q_0|)$,
let $k$   be the largest integer so that $2(20n+9) \sum_{j=0}^{k-1} \phi ({|Q_0|}/{2^{nj}}) < \lambda$ and observe that
by (1) above $\{y \in Q_0: |f(y)| > \lambda\} \subset \bigcup_j Q^{k}_{j}$,
and so by (2) above $|\{y \in Q_0: |f(y)| > \lambda\}| \le  s^k|Q_0|$.
Furthermore, by this choice of $k$ we have
\begin{align*}\lambda  &< 2(20 n+9)\sum_{j=0}^k \phi ( {|Q_0|}/{2^{nj}} ) \\
&\le \frac{2( 20n+9)}{n \ln(2)} \int^{2^n|Q_0|}_{ {|Q_0|}/{2^{kn}}} \phi(u) \frac{du}{u} = c \Psi_{|Q_0|} \Big( \frac{|Q_0|}{2^{kn}} \Big)
\end{align*}
where $c= 2 (20n+9)/ n \ln(2)$.

 So,
 \[\frac{|Q_0|}{2^{kn}} \le  \Psi_{|Q_0|}^{-1}(c_2 \lambda)\,,\quad
 c_2 = \frac{n \ln( 2)}{2(4 + 10n)}\,.\]
 Finally,
 \[  |\{y \in Q_0: |f(y)| > \lambda\}| \le  s^k|Q_0|\le \frac{|Q_0|}{2^{kn}} \le  \Psi_{|Q_0|}^{-1}(c_2 \lambda)\,, \]
 as we wanted to show.

And, for $\lambda \le 2 (20n+9) \phi(|Q_0|)$, pick $c_1$ so that $|Q_0| \le  c_1 \Psi_{|Q_0|}^{-1}(c_2 \lambda)$, and
since $\{ y\in Q_0: |f(y)|>\lambda \}\subset Q_0$, the conclusion holds. Clearly the argument works for all $Q\subset Q_0$.

Finally, in case $\lim_{u \to 0^+} \Psi_{|Q_0|}(u) < \infty$,  the above argument works
for all integers $k$ and, therefore, $f$ is an essentially bounded function on $Q_0$
that satisfies (5.2).  A more thorough argument shows that $f$ is
equivalent to an  essentially Lipschitz function on $Q_0$  \cite {SP}.
\end{proof}

It is now straightforward to  verify that if $f\in vmo_s(Q_0)$,
$f\in VMO(Q_0)$. Pick $\phi$ such that $f\in  bmo_{s,\phi, Q_0}$. Then  integrating
(5.2)  with respect to $\lambda$ it follows that for all subcubes $Q\subset Q_0$,
\begin{align*} \int_Q
|f (y) - m_f (1-s, Q)| \, dy &=\int_0^\infty |\{y \in Q: |f(y) - m_f(1-s,Q)| > \lambda\}|\,d\lambda\\
&\le  c_1  \int_0^\infty \Psi_{|Q|}^{-1} \big(  {c_2 \lambda}/{\|f\|_{s,\phi, Q_0}} \big )\,d\lambda\\
&\le c {\|f \|_{s,\phi, Q_0}} |Q|\phi(2^n|Q|)\,.
\end{align*}

Therefore,
\[\sup_{Q\subset Q_0,|Q|\le u} \frac1{|Q|} \int_Q
|f (y) -  f_Q| \, dy\le c\,\phi(2^n u)\to 0
\]
as $u\to 0^+$ and $f\in VMO(Q_0)$.

\end{document}